\documentclass[10pt,twoside,a4paper,english,reqno]{amsart}
\usepackage[T1]{fontenc}
\usepackage{lmodern}
\usepackage{amscd}
\usepackage{caption}
\usepackage{a4wide}
\usepackage[protrusion=true,expansion=false]{microtype}
\usepackage{babel}
\usepackage{doi}
\usepackage{amssymb,amsthm,amsmath,amsfonts,mathrsfs}
\usepackage{bm,latexsym,enumitem,dsfont,tabularx,graphicx,url}
\usepackage[utf8]{inputenc}
\usepackage{hyperref}
\usepackage{stmaryrd}
\hypersetup{
    colorlinks=true,
    linkcolor=blue,
    filecolor=red,
    urlcolor=blue,
    citecolor=red,
}
\usepackage{nicefrac,upref,ulem,doi,footmisc}
\usepackage[svgnames]{xcolor}
\usepackage{orcidlink}
\theoremstyle{plain}
\newtheorem{theorem}{Theorem}[section]

\newtheorem{lemma}[theorem]{Lemma}

\newtheorem{remark}[theorem]{Remark}

\newtheorem{corollary}[theorem]{Corollary}
\numberwithin{equation}{section}

\usepackage{tikz,tikzscale}
\usepackage{pgfplots}
\usepackage{subcaption}
\pgfplotsset{compat=1.13}

\definecolor{lateksii_color}{RGB}{155,0,119}

\title[HDG for Einstein scalar equation]{A Convergent Hybridizable Discontinuous Galerkin Method for Einstein--Scalar Equations}

\author{Mukul Dwivedi\,\orcidlink{0000-0002-5891-2788} \and Andreas Rupp\,\orcidlink{0000-0001-5527-7187}}
\address{Department of Mathematics, Saarland University, Saarbrücken, Germany}
\email{\{mukul.dwivedi;andreas.rupp\}@uni-saarland.de}

\thanks{This work has been supported by the Deutsche Forschungsgemeinschaft (DFG, German
Research Foundation) -- 577175348.}
\subjclass[2020]{Primary 65M60, 65M12; Secondary 83C05}
\keywords{Einstein--scalar equations, HDG scheme, $L^2$ stability, optimal error estimates}

\newcommand{\Th}{\mathcal T_h}

\begin{document}

\begin{abstract}
We propose and analyze a hybridized discontinuous Galerkin (HDG) method for the spherically symmetric Einstein--scalar system in Bondi gauge. After rewriting the model as a local first-order PDE--ODE system by introducing suitable scaled variables, we construct a semidiscrete scheme in which the element unknowns are computed locally and the coupling is carried by traces on the mesh skeleton. In the present radial setting, these traces can be eliminated recursively, so that only the main evolution variable is
advanced in time, while the metric variables are recovered from discrete constraint relations. We prove local semidiscrete well-posedness, derive a global \(L^2\)--stability estimate, establish an optimal order \(L^2\) error bound for the main evolution variable for polynomial degree \(k\ge 1\), and obtain reconstruction error estimates for the metric variables and the associated mass functional. Numerical experiments verify the predicted spatial convergence rate and illustrate qualitative features of the Einstein--scalar dynamics, including large-data collapse profiles and smooth-pulse evolution.
\end{abstract}

\maketitle

\section{Introduction}

Numerical solutions of the Einstein equations play a fundamental role in modern gravitational physics, enabling the simulation of black holes, neutron stars, and gravitational wave sources; see, for instance, \cite{robert1997einstein,Inverno2000Cauchy,omez1986numerical,Omez1998moving}. The field of numerical relativity traces its origins to the earliest computational formulations of Einstein's theory and the first numerical explorations of black-hole spacetimes \cite{arnowitt1962dynamics,hahn1964two}. Following a prolonged period of challenges concerning coordinate choices, boundary conditions, and numerical stability, significant breakthroughs were achieved through the development of robust formulations and suitable gauge conditions. These advances ultimately enabled stable and accurate simulations of binary black-hole mergers; see \cite{baker2006gravitational,campanelli2006accurate,pretorius2005evolution,pretorius2005numerical} and the references therein. For the detailed background on numerical relativity and the Bondi-gauge formulation considered here, we refer to the references above and to \cite{chen2026convergence}.

From a practical standpoint, numerical relativity has achieved remarkable success. Nonetheless, the stability and convergence analysis of high-order methods for the full nonlinear Einstein system
remains mathematically difficult. Under suitable decompositions and gauge choices, Einstein's equations become highly nonlinear hyperbolic systems, and rigorous numerical analysis in this
setting is still rather limited. It is therefore natural to begin with symmetry-reduced models. Among these, the spherically symmetric Einstein--scalar field system is particularly important, since it retains key analytical and physical features such as gravitational collapse, horizon formation, and long-time metric evolution. The foundational PDE theory for this model was developed by Christodoulou; see
\cite{christodoulou1986global,christodoulou1986problem, christodoulou1987mathematical,christodoulou1987structure,christodoulou1991formation}. More recently, Chen et al. \cite{chen2026convergence} proved $L^2$-stability and optimal-order convergence of a discontinuous Galerkin method for the Bondi-gauge formulation, including large-data regimes that lead to collapse toward black-hole-type profiles.

We briefly recall the model following the derivation in Bondi gauge in \cite{chen2026convergence} and the classical works on the spherically symmetric Einstein--massless-scalar system \cite{christodoulou1986global}. In geometric units ($G=c=1$), the Einstein field equations are
\begin{equation}\label{eq:Einstein}
G_{\mu\nu} = 8\pi\,T_{\mu\nu},
\end{equation}
where $G_{\mu\nu}:=R_{\mu\nu}-\tfrac12 g_{\mu\nu}R$ is the Einstein tensor. Here, $g_{\mu\nu}$ is the spacetime metric, $R_{\mu\nu}$ is the Ricci tensor, and $R$ is the scalar curvature.
For a massless scalar field $\phi$, the stress--energy tensor is
\[
T_{\mu\nu} = \partial_\mu\phi\,\partial_\nu\phi
-\frac12\,g_{\mu\nu}\,g^{\alpha\beta}\,\partial_\alpha\phi\,\partial_\beta\phi.
\]
In spherical symmetry, adopting Bondi-type null coordinates $(t,r,\theta,\varphi)$ with areal radius $r\ge 0$,
one may write the metric in the Bondi--Sachs form \cite{bondi1962gravitational,chen2026convergence,sachs1962gravitational}
\begin{equation}\label{eq:BondiMetric}
ds^2 = -g(t,r)\,\tilde g(t,r)\,dt^2 - 2\,g(t,r)\,dt\,dr + r^2\,d\Omega^2,
\qquad d\Omega^2 = d\theta^2 + \sin^2\theta\,d\varphi^2,
\end{equation}
where $g$ and $\tilde g$ are positive metric coefficients.

The reduced equations in \eqref{eq:BondiMetric} are obtained by combining two ingredients.
First, suitable components of the Einstein equations provide radial ``constraint'' relations for the metric
coefficients. Second, the conservation law $\nabla_\mu T^{\mu\nu}=0$ yields the wave equation for $\phi$,
which couples back to the metric through $g$ and $\tilde g$; see \cite[\S2 and Appendix~A]{chen2026convergence}
for a detailed derivation in the present normalization.
To express the resulting system in a simple first-order form, one introduces the auxiliary variables
\begin{subequations}\label{eq:ES_aux}
\begin{align}
u(t,r) &:=\bigl(r\,\phi(t,r)\bigr)_r, \label{eq:ES_uphi}\\
\tilde u(t,r) &:= \phi(t,r) = \frac1r\int_0^r u(t,s)\,ds. \label{eq:ES_tildeu}
\end{align}
\end{subequations}
Here, the averaging relation \eqref{eq:ES_tildeu} encodes regularity at $r=0$, and it is already the first place
where a nonlocal dependence on the full interval $[0,r]$ enters the formulation.

With this notation (and using the same normalization as \cite{chen2026convergence}, in which the factor $4\pi$
is absorbed), the symmetry-reduced Einstein--scalar system becomes the coupled PDE--ODE system \cite{chen2026convergence}:
\begin{subequations}\label{eq:ES_original}
\begin{align}
u_t - \!\Big(\frac12\,\tilde g\,u\Big)_r &= -\frac12\,\tilde g_r\,\tilde u, \label{eq:ES_u}\\
g_r &= \frac1r\,g\,\bigl(u-\tilde u\bigr)^2, \label{eq:ES_g}\\
\tilde g_r &= \frac1r\,(g-\tilde g). \label{eq:ES_gtilde} 
\end{align}
\end{subequations}
We consider $r\in[0,b]$ for a fixed outer radius $b>0$ and $(t,r)\in[0,T]\times[0,b]$.
The outer boundary data are prescribed at $r=b$ by
\begin{equation}\label{eq:outerBC}
u(t,b)=U_b(t), \qquad g(t,b)=1,
\end{equation}
while regularity at the center $r=0$ requires $r\phi(t,r)\to 0$ as $r\to 0$ and $\tilde g$ remains bounded. A useful point for both analysis and numerics is that the constraint equations \eqref{eq:ES_g} and \eqref{eq:ES_gtilde}
can be integrated explicitly (for each fixed $t$), which makes the nonlocal structure completely visible:
\begin{subequations}\begin{align}\label{ES_gatra}
    \tilde g(r) &= \frac{1}{r}\int_0^r g(s)\,ds,\\  \label{ES_gatrb}
    g(r) &= \exp \Big(-\int_r^b \frac{1}{s}(u -\tilde u)^2\,ds\Big).
\end{align}\end{subequations}
Thus, even though \eqref{eq:ES_u} is written as a local transport equation, its coefficients and source term depend
on spatial averages and integrals through \eqref{eq:ES_tildeu}, \eqref{ES_gatra}, and \eqref{ES_gatrb}.

A key structural feature is that \eqref{eq:ES_tildeu} and the representation of $\tilde g$ are nonlocal in $r$.
To localize these equations, we introduce \( w(t,r) := r\,\tilde u(t,r) \) and  \( z(t,r):= r\,\tilde g(t,r)\).
Then, from \eqref{eq:ES_tildeu},
\[
w_r=(r\tilde u)_r=u.
\]
Likewise, using \eqref{ES_gatra},
\[
z_r=(r\tilde g)_r=g.
\]
The corresponding center conditions are
\begin{equation}\label{eq:centerBC}
w(t,0)=0, \qquad z(t,0)=0.
\end{equation}
Thus, the model \eqref{eq:ES_tildeu}--\eqref{eq:ES_original} is equivalently represented by the local first-order system
\begin{subequations}\label{eq:ES_local}
\begin{align}
u_t - \!\Big(\frac12\,\tilde g\,u\Big)_r &= -\frac12\,\tilde g_r\,\tilde u, \label{eq:local_u}\\
w_r &= u, \label{eq:local_w}\\
g_r &= \frac1r\,g\,\bigl(u-\tilde u\bigr)^2, \label{eq:local_g}\\
z_r &= g, \label{eq:local_z}
\end{align}
\end{subequations}
with $\tilde u=w/r$, $\tilde g=z/r$ for $r>0$, center conditions \eqref{eq:centerBC}, and outer conditions \eqref{eq:outerBC}.
This local form is the starting point for our HDG formulation.

We also record a basic bound for the metric coefficients, which will be repeatedly used to identify the correct upwind directions and to control the coefficients in the numerical fluxes.

\begin{lemma}[Bounds for the metric coefficients]\label{lem:metric_bounds}
Assume $g$ and $\tilde g$ satisfy \eqref{eq:ES_g}--\eqref{eq:ES_gtilde} with the outer normalization $g(b)=1$.
Then for every fixed $t$ and all $r\in(0,b]$,
\[
0 < \tilde g(r) \le g(r) \le 1.
\]
\end{lemma}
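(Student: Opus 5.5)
The plan is to work with the integrated representations \eqref{ES_gatrb} and \eqref{ES_gatra}, which follow from \eqref{eq:ES_g}--\eqref{eq:ES_gtilde}, the normalization $g(b)=1$, and boundedness of $\tilde g$ at the center. For the latter, note that $\tilde g_r=(g-\tilde g)/r$ means $(r\tilde g)_r=g$. Boundedness of $\tilde g$ near $0$ forces $r\tilde g\to 0$, which gives $\tilde g(r)=\frac1r\int_0^r g(s)\,ds$. We fix $t$ throughout.

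\emph{Step 1: bounds on $g$.} Equation \eqref{eq:ES_g} is linear in $g$, and its solution with $g(b)=1$ is \eqref{ES_gatrb}. The exponent $-\int_r^b s^{-1}(u-\tilde u)^2\,ds$ is nonpositive for $r\in(0,b]$. It is also finite whenever $u,\tilde u$ are, say, continuous on $[r,b]$ with $r>0$. Hence $0<g(r)\le 1$.

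\emph{Step 2: monotonicity of $g$.} By \eqref{eq:ES_g}, $g_r=\frac1r g(u-\tilde u)^2\ge 0$, so $g$ is nondecreasing on $(0,b]$. There is a subtle point here: $g$ might tend to $0$ as $r\to0$ if $\int_0 s^{-1}(u-\tilde u)^2\,ds$ diverges. This does not matter, because $g$ is positive on $(0,b]$ and we only need integrability on $[0,r]$, which holds since $0<g\le 1$.

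\emph{Step 3: compare $\tilde g$ with $g$.} Since $g$ is nondecreasing and positive,
\[
\tilde g(r)=\frac1r\int_0^r g(s)\,ds \le \frac1r\int_0^r g(r)\,ds = g(r).
\]
Moreover $\tilde g(r)>0$, because $g>0$ on $(0,r]$, a set of positive measure. Combining this with Step 1 gives $0<\tilde g\le g\le 1$.

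The main obstacle is justifying the averaged formula for $\tilde g$, that is, excluding the homogeneous solution $C/r$ of \eqref{eq:ES_gtilde}. We do this through the center regularity assumption that $\tilde g$ stays bounded, as explained above. Alternatively, one can argue directly from \eqref{eq:ES_gtilde}. Set $h=g-\tilde g$. Then
\[
(rh)_r = r g_r + g - g = r g_r \ge 0,
\]
using $(r\tilde g)_r=g$. Since $rh\to0$ as $r\to 0$, we get $rh\ge 0$ on $(0,b]$, and so $\tilde g\le g$.
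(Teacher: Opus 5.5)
Your proof is correct and follows the same route as the paper: you get $0<g\le 1$ from the exponential representation \eqref{ES_gatrb}, monotonicity of $g$ from \eqref{eq:ES_g}, and $0<\tilde g\le g$ from the averaging formula \eqref{ES_gatra}. You are slightly more careful than the paper in two places: you establish $g>0$ before using it to conclude $g_r\ge 0$, and you justify \eqref{ES_gatra} from the boundedness of $\tilde g$ at the center, which the paper takes as given.
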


\begin{proof}
 From \eqref{eq:ES_g}, we have $g_r = \frac1r g (u-\tilde u)^2 \ge 0$ for $r>0$. Hence, $g$ is nondecreasing in $r$ and $g(r)\le g(b)=1$. Moreover, \eqref{ES_gatra} implies $\tilde g(r) \le g(r)$, and \eqref{ES_gatrb} implies $g> 0$. This immediately yields $0< \tilde g\le 1$.
\end{proof}

These considerations suggest that the spherically symmetric Einstein--scalar system \eqref{eq:ES_original} provides a natural model for testing new high-order discontinuous discretizations. To obtain a numerical method with a better computational structure, high-order accuracy, and fewer globally coupled degrees of freedom, we consider a hybridizable discontinuous Galerkin (HDG) approach. HDG methods were first developed for second-order elliptic problems \cite{cockburn2009unified}, and have since been successfully extended to many other classes of partial differential equations. 
These include convection--diffusion problems \cite{cockburn2009hdgcondiff,Ngunyen2009hdgimplicitnonlinear}, Stokes and incompressible flow systems \cite{cockburn2009hdgstokesderi,cockburn2009hdgstokesanaly,Lu2024hdghomogenstokes,Ngunyen2009hdgstokes,Nguyen2011incompre,Rhebergen2012incompre}, the Helmholtz equation \cite{Huangxin2013Helmhol,Griesmair2011Helmhotz,Jintao2014Helmhol}, and elasticity equations \cite{Cockburn2013hdglinearelast,Kabaria2013hdgnonlinearelast,Qiu2018hdglinearelast}. 
More recently, HDG techniques have also been developed for nonlinear dispersive and fully nonlinear problems, including the Ostrovsky equation \cite{dwivedi2026hybridizable}, the Camassa--Holm--Kadomtsev--Petviashvili equation \cite{dwivedi2026hybridizablechkp}, the Monge--Amp\`ere problem \cite{Nguyen2024hdgMonge}, and KdV-type equations \cite{Chen2018KdVfifth,dong2017optimally,Samii2016KdV}.
A defining feature of HDG is that the globally coupled unknowns are numerical traces on the mesh skeleton, while the element unknowns are recovered locally. This structure not only reduces the size of the global algebraic system compared with standard DG methods, but also makes HDG particularly attractive from the solver point of view. In this direction, Lu et al. developed homogeneous multigrid methods for HDG discretizations and provided an analysis of injection operators for geometric multigrid solvers for HDG methods \cite{Lu2022Analysis,Lu2022Homomulti}. Related multiscale and localized formulations have also been investigated in the HDG setting; see, for instance, \cite{LuMaierRupp2025}.
For the Bondi-gauge Einstein--scalar system considered here, such a structure is particularly attractive because the evolution equation is coupled with radial constraint relations, and therefore a method based on local reconstruction and reduced global coupling provides a natural and efficient discretization framework.

The main contribution of this paper is to formulate and analyze a hybridized trace-based semidiscretization for the Bondi--gauge Einstein--scalar system \eqref{eq:ES_original} on a finite radial interval. After introducing the
auxiliary variables \(w=r\tilde u\) and \(z=r\tilde g\), the nonlocal structure
of the model is rewritten in the local first-order form \eqref{eq:ES_local},
while the discrete reconstruction is carried out through the local reconstruction formulae, which makes it possible to couple the elements only through single-valued traces. In the present one-dimensional radial setting, these traces can in fact be eliminated recursively, so that only the principal evolution variable \(u_h\) is advanced in time, while the remaining quantities are reconstructed from the discrete constraint relations.
On the analytical side, we prove local semidiscrete well--posedness, derive a global \(L^2\)--stability estimate, establish an optimal order \(L^2\) error bound for \(u_h\) for polynomial degree \(k\ge 1\), and obtain corresponding reconstruction estimates for
the metric variables and the associated mass functional.
The numerical experiments support the theory and demonstrate that the method reproduces the expected qualitative behavior of the Einstein--scalar dynamics, including collapse toward black-hole profiles. In this sense, the paper shows that HDG provides not only a reduced discrete coupling structure but also a rigorous and effective framework for high-order approximation of this symmetry-reduced Einstein system.

The paper is organized as follows. In Section \ref{sec:hdg}, we introduce the HDG discretization and derive the local and global trace formulations. Section \ref{sec:hdg-stability} is devoted to the semidiscrete well-posedness and the $L^2$-stability estimate. In Section \ref{sec:hdg-error}, we prove the optimal error estimate and the corresponding reconstruction-error bounds. Section \ref{sec:numerical} presents numerical experiments that verify the theoretical rates and illustrate black-hole formation and the associated metric behavior. We close with a brief discussion in Section \ref{sec:conclusion}.

\section{A hybridizable discontinuous Galerkin discretization}\label{sec:hdg}

We discretize the local first--order Einstein--scalar system \eqref{eq:ES_local} in space on $[0,b]$ and keep time continuous.

\subsection{Mesh, skeleton notation, and discrete spaces}\label{subsec:spaces}

Let $0=r_0<r_1<\dots<r_N=b$ be a partition of $[0,b]$ and set
\[
I_i:=(r_{i-1},r_i),\qquad i=1,\dots,N,\qquad h_i = r_i-r_{i-1},\qquad \mathcal T_h:=\{I_i\}_{i=1}^N,\qquad h= \max h_i.
\]
We denote the set of mesh nodes by
\[
\mathcal F_h:=\{r_i\}_{i=0}^N,\qquad \mathcal F_h^\partial:=\{r_0,r_N\},\qquad
\mathcal F_h^0:=\mathcal F_h\setminus \mathcal F_h^\partial.
\]
For an integer $k\ge 0$, let $ P^k(I_i)$ be the polynomials of degree at most $k$ on $I_i$, and
define the broken space
\[
V_h^k:=\bigl\{v\in L^2(0,b):\ v|_{I_i}\in  P^k(I_i)\ \text{for }i=1,\dots,N\bigr\}.
\]
We use the standard volume inner product
\[
(v_1,v_2)_{I_i} := \int_{I_i} v_1 v_2\,dr,
\qquad
(v_1,v_2)_{\mathcal T_h}:=\sum_{i=1}^N (v_1,v_2)_{I_i},
\]
and the oriented boundary pairing
\[
\langle \psi, \eta\,n\rangle_{\partial I_i} := \psi(r_i)\,\eta(r_i) - \psi(r_{i-1})\,\eta(r_{i-1}),
\]
where the outward normal on $\partial I_i$ is $n(r_{i-1})=-1$ and $n(r_i)=+1$.
We also introduce the boundary--restricted trace spaces
\[
M_h^{R}:= L^2(\mathcal F_h\setminus \{r_0\}),\qquad
M_h^{L}:=L^2(\mathcal F_h\setminus \{r_N\}),\qquad
M_h^{0}:=L^2(\mathcal F_h^0).
\]

Fix an element $I_i$. Let us consider system \eqref{eq:ES_local} in $I_i$, and prescribe the following
\begin{equation}\label{eq:cont-inflow-data}
u(t,r_i)=\widehat u_i(t),\qquad g(t,r_i)=\widehat g_i(t),\qquad
w(t,r_{i-1})=\widehat w_{i-1}(t),\qquad z(t,r_{i-1})=\widehat z_{i-1}(t),
\end{equation}
along with the initial data $u_0|_{I_i}$.
Assume the unique solvability of the local system \eqref{eq:ES_local} on an element
$I_i=(r_{i-1},r_i)$ for given data \eqref{eq:cont-inflow-data} and a sufficiently smooth
initial condition $u_0|_{I_i}$, and denote the corresponding elementwise solution by
$(u,w,g,z)|_{I_i}$. Define $(u,w,g,z)$ on $[0,b]$ by collecting these element solutions.

This piecewise function $(u,w,g,z)$ is a global (classical) solution of the original problem on $[0,b]$
if and only if the traces of the quantities that are differentiated in $r$ are single-valued at
every interior node. Concretely, for each $r_i\in\mathcal F_h^0$ (with $i=1,\dots,N-1$) we require
\begin{equation}\label{eq:cont-transmission}
\begin{aligned}
w|_{I_i}(t,r_i)=w|_{I_{i+1}}(t,r_i) &:= \widehat w_i(t),\qquad 
z|_{I_i}(t,r_i)=z|_{I_{i+1}}(t,r_i) := \widehat z_i(t),\\
u|_{I_i}(t,r_i)=u|_{I_{i+1}}(t,r_i) &:= \widehat u_i(t), \qquad
g|_{I_i}(t,r_i)=g|_{I_{i+1}}(t,r_i) := \widehat g_i(t).
\end{aligned}
\end{equation}

The physical boundary data close the system by fixing the remaining inflow values at $r=0$ and $r=b$ by
\[
\widehat u_N(t)=U_b(t),\qquad \widehat g_N(t)=1,\qquad
\widehat w_0(t)=0,\qquad \widehat z_0(t)=0.
\]
Our HDG discretization is then designed as a discrete analogue of this characterization: element
unknowns are computed locally for given traces, and the traces are determined globally by enforcing a
weak form of \eqref{eq:cont-transmission} together with the boundary conditions.

\subsection{HDG method}\label{subsec:hdg-scheme}
For a fixed element \(I_i=(r_{i-1},r_i)\), assume that the traces
\(
\widehat u_h(r_i),~ \widehat g_h(r_i)\), \(\widehat w_h(r_{i-1}), ~\widehat z_h(r_{i-1})
\), and a function \(u_h|_{I_i}\in  P^k(I_i)\) are provided. We define the local reconstructions on $ I_i$ %\textcolor{red}{(MD: These reconstructed quantities are, in general, not polynomials on $I_i$)} 
as
\begin{subequations}\label{eq:hdg-local-reconstruction}
\begin{align}
w_h(r) &:= \widehat w_h(r_{i-1}) + \int_{r_{i-1}}^r u_h(s)\,ds,
\label{eq:hdg-local-reconstruction-w}\\
g_h(r) &:= \widehat g_h(r_i)\,
\exp\!\Bigl(-\int_r^{r_i}\frac{1}{s}\Bigl(u_h(s)-\tilde u_h(s)\Bigr)^2 ds\Bigr),
\label{eq:hdg-local-reconstruction-g}\\
z_h(r) &:= \widehat z_h(r_{i-1}) + \int_{r_{i-1}}^r g_h(s)\,ds,
\label{eq:hdg-local-reconstruction-z}
\end{align}
\end{subequations}
where, for \(r>0\),  \(
\tilde u_h(r):=\frac{w_h(r)}{r}\) with the endpoint value \( \tilde u_h(0):=u_h(0)\).
Thus, we seek the local unknown \(u_h\in  P^k(I_i)\) that  satisfies
\begin{equation}\label{eq:hdg-local-u}
\bigl((u_h)_t,\phi_u\bigr)_{I_i}
+\Bigl(\frac12\,\tilde g_h\,u_h,(\phi_u)_r\Bigr)_{I_i}
-\Bigl\langle \widehat{\mathcal F}_{u,h},\phi_u\,n\Bigr\rangle_{\partial I_i}
+\Bigl(\frac12\,(\tilde g_h)_r\,\tilde u_h,\phi_u\Bigr)_{I_i}
=0
\qquad\forall \phi_u\in P^k(I_i),
\end{equation}
where, for \(r>0\), \( \tilde g_h(r):=\frac{z_h(r)}{r}\) with the endpoint value \(\tilde g_h(0):=g_h(0) \), and the numerical flux on $I_i$ is
\begin{subequations}\label{eq:hdg-flux}
\begin{align}
\widehat{\mathcal F}_{u,h}(r_i) &:= \frac12\,\tilde g_h(r_i)\,\widehat u_h(r_i),
\label{eq:hdg-flux-right}\\
\widehat{\mathcal F}_{u,h}(r_{i-1}) &:= \frac12\,\tilde g_h(r_{i-1})\,u_h(r_{i-1}).
\label{eq:hdg-flux-left}
\end{align}
\end{subequations}
Consequently, we assemble criteria for the global trace unknowns $\widehat u_h,\,\widehat g_h\in M_h^{R},~
\widehat w_h,\,\widehat z_h\in M_h^{L}$ from which $u_h,w_h,g_h$ and $z_h$ can be derived in an element local procedure via nodal transmission conditions
\begin{subequations}\label{eq:hdg-transmission}
\begin{align}
\widehat{\mathcal F}_{u,h}|_{I_i}(r_i)
&=\widehat{\mathcal F}_{u,h}|_{I_{i+1}}(r_i),
\label{eq:hdg-transmission-u}\\
  w_h|_{I_i}(r_i)&= w_h|_{I_{i+1}}(r_i),
\label{eq:hdg-transmission-w}\\
z_h|_{I_i}(r_i)&=z_h|_{I_{i+1}}(r_i),
\label{eq:hdg-transmission-z}\\
g_h|_{I_i}(r_i)&=g_h|_{I_{i+1}}(r_i),
\label{eq:hdg-transmission-g}
\end{align}
\end{subequations}
for $i=1,\dots,N-1$. The boundary data are
\begin{equation}\label{eq:hdg-boundary-traces}
\widehat u_h(t,b)=U_b(t),\qquad
\widehat g_h(t,b)=1,\qquad
\widehat w_h(t,0)=0,\qquad
\widehat z_h(t,0)=0.
\end{equation}
\subsection{Local well--posedness} For a given \(u_h\in V_h^k\), the transmission conditions
\eqref{eq:hdg-transmission} and the boundary conditions
 \eqref{eq:hdg-boundary-traces} determine the traces uniquely. 
More precisely, the traces are given by the recursive formulas
\begin{subequations}\label{eq:trace-recursions}
\begin{align}
\widehat w_h(r_i) &=w_h|_{I_{i+1}}(r_i) = w_h|_{I_{i}}(r_i)
=
\widehat w_h(r_{i-1}) +
\int_{r_{i-1}}^{r_i} u_h(s)\,ds, 
\label{eq:trace-recursion-w}\\
\widehat g_h(r_i)
&= g_h|_{I_i}(r_i) =g_h|_{I_{i+1}}(r_i) =
\widehat g_h(r_{i+1})
\exp\!\Bigl(
-\int_{r_i}^{r_{i+1}}
\frac1s\bigl(u_h(s)-\tilde u_h(s)\bigr)^2\,ds
\Bigr), 
\label{eq:trace-recursion-g}\\
\widehat z_h(r_i) &= z_h|_{I_{i+1}}(r_i) = z_h|_{I_{i}}(r_i)
=\widehat z_h(r_{i-1}) +
\int_{r_{i-1}}^{r_i} g_h(s)\,ds,
\label{eq:trace-recursion-z}\\
\frac12\,\tilde g_h(r_i)\,\widehat u_h(r_i)&=\frac12\,\tilde g_h(r_i)\, u_h|_{I_{i+1}}(r_i)  \Longrightarrow \widehat u_h(r_i) = u_h|_{I_{i+1}}(r_i),
\label{eq:trace-recursion-u}
\end{align}
\end{subequations}
for $i=1,\dots,N-1$, where all chains are started by values in \eqref{eq:hdg-boundary-traces} and $\tilde g_h(r_i)>0$ from \eqref{eq:disc-metric-bounds}. Thus, in the present one-dimensional radial setting, the trace variables can be eliminated recursively once \(u_h\) is known, and the semidiscrete evolution
may be viewed as an ODE for the coefficients of $u_h$ alone.

\begin{theorem}[Local semidiscrete well-posedness]
\label{thm:local-wellposedness-hdg}
Let \(U_b\in C([0,T])\) and let \(u_h(0)\in V_h^k\) be given.
Then there exists \(T_\ast\in(0,T]\) such that the  scheme \eqref{eq:hdg-local-u}, \eqref{eq:hdg-transmission}, and
\eqref{eq:hdg-boundary-traces} admits a unique solution
\[
u_h\in C^1([0,T_\ast];V_h^k).
\]
The associated traces \( \widehat u_h,\; \widehat g_h,\; \widehat w_h,\; \widehat z_h \) are uniquely determined by \eqref{eq:trace-recursions}.
\end{theorem}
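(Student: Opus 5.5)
We reduce the scheme to a finite-dimensional ODE for the coefficient vector of \(u_h\) and apply the Picard--Lindel\"of theorem. Fix a basis of \(V_h^k\) (for instance Legendre polynomials on each \(I_i\)) and write \(u_h(t)=\sum_j c_j(t)\varphi_j\) with \(c\in\mathbb R^{N(k+1)}\). For a given coefficient vector \(c\) we define the traces and reconstructions by the recursions \eqref{eq:trace-recursions}, in the following order. First \(\widehat w_h\) and \(w_h\) are computed by a forward sweep from \(\widehat w_h(0)=0\). This determines \(\tilde u_h=w_h/r\). Next \(\widehat g_h\) and \(g_h\) are computed by a backward sweep from \(\widehat g_h(b)=1\). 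Then \(\widehat z_h\), \(z_h\) and \(\tilde g_h=z_h/r\) are obtained by a forward sweep. Finally \(\widehat u_h(r_i)=u_h|_{I_{i+1}}(r_i)\), with \(\widehat u_h(b)=U_b(t)\).

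The only subtlety in this step is the division by \(r\) on \(I_1\). Since \(w_h(r)=\int_0^r u_h\), we can write \(\tilde u_h(r)=\int_0^1 u_h(\theta r)\,d\theta\). This is a polynomial on \(I_1\) that depends linearly on \(c\), and its value at \(0\) is \(u_h(0)\). Consequently \((u_h-\tilde u_h)^2/s=s\,q(s)^2\) with \(q\) a polynomial, so the integrand in \eqref{eq:hdg-local-reconstruction-g} is smooth up to \(s=0\). On \(I_i\) with \(i\ge2\) we have \(s\ge r_1>0\), so no singularity occurs there. The same argument applied to \(z_h=\int_0^r g_h\) gives \(\tilde g_h(r)=\int_0^1 g_h(\theta r)\,d\theta\) on \(I_1\), and hence \((\tilde g_h)_r\) is bounded there.

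Each of these maps is built from polynomial operations, integration over bounded intervals, and the exponential. They are therefore locally Lipschitz, indeed \(C^\infty\), functions of \(c\) in every norm involved. The exponential also gives \(0<g_h\le 1\) and \(\tilde g_h>0\), which is the discrete analogue of Lemma~\ref{lem:metric_bounds} and justifies dividing by \(\tilde g_h(r_i)\) in \eqref{eq:trace-recursion-u}.

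Substituting all of this into \eqref{eq:hdg-local-u} and inverting the block-diagonal mass matrix \(M\) gives
\[
M\dot c = F(t,c),
\]
where \(F\) collects the volume terms \(\bigl(\tfrac12\tilde g_h u_h,(\phi_u)_r\bigr)\) and \(\bigl(\tfrac12(\tilde g_h)_r\tilde u_h,\phi_u\bigr)\) and the flux terms. The time dependence of \(F\) enters only through \(\widehat u_h(b)=U_b(t)\), and it does so affinely. Hence \(F\) is continuous in \(t\) and locally Lipschitz in \(c\), uniformly for \(t\in[0,T]\) on bounded sets.

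Picard--Lindel\"of then yields \(T_\ast\in(0,T]\) and a unique solution \(c\in C^1([0,T_\ast])\), that is, \(u_h\in C^1([0,T_\ast];V_h^k)\). The traces are uniquely determined because, for each \(t\), the transmission conditions \eqref{eq:hdg-transmission} together with \eqref{eq:hdg-boundary-traces} are equivalent to the recursions \eqref{eq:trace-recursions}. For this equivalence we use \(\tilde g_h(r_i)>0\) to cancel the factor in \eqref{eq:hdg-transmission-u}. Conversely, any solution of the full scheme produces coefficients solving the ODE, and so it coincides with the one constructed above.

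The main obstacle is the Lipschitz regularity near \(r=0\), where \(\tilde u_h\), \(\tilde g_h\) and \((\tilde g_h)_r\) are defined by dividing by \(r\). The averaging representation \(\int_0^1(\cdot)(\theta r)\,d\theta\) handles this. We also need the difference quotient \((g_h-\tilde g_h)/r\) to stay bounded, which follows from the smoothness of \(g_h\) on \(\overline{I_1}\) established in the first step.
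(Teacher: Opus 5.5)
Your proposal is correct and follows essentially the same route as the paper: you eliminate the traces recursively by forward sweeps for $\widehat w_h,\widehat z_h$ and a backward sweep for $\widehat g_h$. You remove the $1/r$ singularity on $I_1$ via the averaging representation $\int_0^1(\cdot)(\theta r)\,d\theta$ and the factorization $u_h-\tilde u_h=r\,q_h$, and then apply Picard--Lindel\"of to the mass-matrix ODE. Your explicit ordering of the sweeps and your remark that any solution of the full scheme satisfies the reduced ODE make the uniqueness and trace-determination slightly more transparent than in the paper, but they do not change the argument.
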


\begin{proof}
Choose on each element \(I_i\) a basis \(\{\varphi_{i,m}\}_{m=0}^k\) of \(P^k(I_i)\), and write
\[
u_h|_{I_i}(r,t)=\sum_{m=0}^k U_{i,m}(t)\,\varphi_{i,m}(r).
\]
Let \(U(t)\in \mathbb R^{N(k+1)}\) be the coefficient vector. For a given \(U\), the traces are uniquely determined by \eqref{eq:trace-recursions}. In particular,
\(\widehat w_h\) and \(w_h\) depend linearly on \(U\). On the first element \(I_1=(0,r_1)\),
\[
w_h(r)=\int_0^r u_h(s)\,ds,
\]
hence \(w_h(r)=r\,p_h(r)\) for a unique polynomial \(p_h\in P^k(I_1)\). Therefore \(
\tilde u_h(r)=p_h(r)\text{ on }I_1,
\) and \(
u_h(r)-\tilde u_h(r)=r\,q_h(r)
\) for some polynomial \(q_h\in P^{k-1}(I_1)\), as \( u_h(0) =\tilde u_h(0)\). It follows that
\(
\frac1r\bigl(u_h-\tilde u_h\bigr)^2
\)
extends continuously to \(r=0\) and depends polynomially on \(U\). On the remaining elements,
\(r\ge r_{i-1}>0\), so there is no singularity. Hence, by \eqref{eq:hdg-local-reconstruction-g} and \eqref{eq:trace-recursion-g}, the maps
\[
U\mapsto g_h,\qquad U\mapsto \widehat g_h
\]
are locally Lipschitz. The same is then true for
\(
U\mapsto z_h,\; U\mapsto \widehat z_h
\)
by \eqref{eq:hdg-local-reconstruction-z} and \eqref{eq:trace-recursion-z}. For \(i\ge2\),
\[
\tilde g_h=\frac{z_h}{r},
\qquad
(\tilde g_h)_r=\frac{g_h-\tilde g_h}{r},
\]
so \(U\mapsto \tilde g_h\) and \(U\mapsto (\tilde g_h)_r\) are locally Lipschitz on \(I_i\).
On \(I_1\),
\[
z_h(r)=\int_0^r g_h(s)\,ds,
\qquad
\tilde g_h(r)=\frac1r\int_0^r g_h(s)\,ds=\int_0^1 g_h(\theta r)\,d\theta.
\]
Moreover, differentiating \eqref{eq:hdg-local-reconstruction-g} gives
\[
(g_h)_r=\frac1r\,g_h\,(u_h-\tilde u_h)^2,
\]
so \((g_h)_r\) is continuous on \(I_1\), and therefore
\[
(\tilde g_h)_r(r)=\int_0^1 \theta\,(g_h)_r(\theta r)\,d\theta.
\]
Thus \(U\mapsto \tilde g_h\) and \(U\mapsto (\tilde g_h)_r\) are locally Lipschitz on \(I_1\) as well.
Finally, \(\widehat u_h(b)=U_b(t)\), while \(\widehat u_h(r_i)=u_h|_{I_{i+1}}(r_i)\) at the interior nodes;
hence the only explicit time dependence enters through \(U_b\), and the trace \(\widehat u_h\) is continuous
in \(t\) and locally Lipschitz in \(U\).

Testing \eqref{eq:hdg-local-u} with the basis functions \(\varphi_{i,m}\) yields a finite-dimensional system
\[
M_h\,\dot U(t)=\mathcal R(t,U(t)),
\]
where \(M_h\) is the block diagonal mass matrix. Each block is the mass matrix on \(P^k(I_i)\),
hence \(M_h\) is symmetric positive definite and therefore invertible. By the preceding argument,
\(\mathcal R\) is continuous in \(t\) and locally Lipschitz in \(U\). Therefore
\[
\dot U(t)=M_h^{-1}\mathcal R(t,U(t))
\]
is a finite-dimensional ODE with continuous time dependence and locally Lipschitz state dependence.
By the Picard--Lindel\"of theorem, there exists \(T_\ast\in(0,T]\) such that this ODE admits a unique solution
\[
U\in C^1([0,T_\ast];\mathbb R^{N(k+1)}).
\]
This defines a unique semidiscrete solution
\[
u_h\in C^1([0,T_\ast];V_h^k),
\]
and the associated traces are uniquely recovered from \eqref{eq:trace-recursions}.
\end{proof}

\begin{lemma}[Exact discrete constraint reconstruction]
\label{lem:hdg-metric-reconstruction}
Let \((u_h,w_h,g_h,z_h,\widehat u_h,\widehat w_h,\widehat g_h,\widehat z_h)\) be
a solution of
\eqref{eq:hdg-local-reconstruction}--\eqref{eq:hdg-boundary-traces}.
Then, for each fixed \(t\in[0,T]\),
\begin{align}
(w_h)_r &= u_h \qquad\text{on each } I_i,
\label{eq:disc-w-strong}\\
(g_h)_r &= \frac1r\,g_h\,(u_h-\tilde u_h)^2 \qquad\text{on each } I_i,
\label{eq:disc-g-strong}\\
(z_h)_r &= g_h \qquad\text{on each } I_i.
\label{eq:disc-z-strong}
\end{align}
Moreover, the transmission conditions \eqref{eq:hdg-transmission} imply the global representation formulas 
\begin{equation}\label{eq:disc-global-representations}
w_h(r)=\int_0^r u_h(s)\,ds,\qquad
g_h(r)=\exp\!\Bigl(-\int_r^b \frac1s\bigl(u_h(s)-\tilde u_h(s)\bigr)^2\,ds\Bigr),\qquad
z_h(r)=\int_0^r g_h(s)\,ds,
\end{equation}
for all \(r\in [0,b]\). Consequently,
\begin{equation}\label{eq:disc-metric-bounds}
0<g_h(r)\le1,\qquad
0< \tilde g_h(r)=\frac{z_h(r)}{r}\le g_h(r)\le1
\qquad\text{for all } r\in(0,b],
\end{equation}
and
\begin{equation}\label{eq:disc-tildeg-identity}
(\tilde g_h)_r = \frac1r\,(g_h-\tilde g_h)\ge0,
\qquad
(\tilde g_h)_r\le \frac{g_h}{r}
\qquad\text{on }(0,b].
\end{equation}
Finally, \( \tilde u_h(0)=u_h(0)\) and \(\tilde g_h(0)=g_h(0)\).
\end{lemma}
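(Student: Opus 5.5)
The plan is to argue element by element and then glue the pieces together with the transmission conditions.

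\emph{Local ODEs.} On each \(I_i\), the identities \eqref{eq:disc-w-strong} and \eqref{eq:disc-z-strong} follow from the fundamental theorem of calculus applied to \eqref{eq:hdg-local-reconstruction-w} and \eqref{eq:hdg-local-reconstruction-z}. This needs \(g_h\) continuous on \(I_i\), which we check below. Differentiating the exponential in \eqref{eq:hdg-local-reconstruction-g} with respect to its lower limit \(r\) gives \eqref{eq:disc-g-strong}. For \(i\ge2\) the integrand is continuous, since \(r\ge r_{i-1}>0\). On \(I_1\) we reuse the observation from the proof of Theorem~\ref{thm:local-wellposedness-hdg}: there \(w_h=r\,p_h\) and \(u_h-\tilde u_h=r\,q_h\) with polynomials \(p_h,q_h\). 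Hence \(\frac1r(u_h-\tilde u_h)^2=r\,q_h^2\) is a polynomial, so the reconstruction is smooth up to \(r=0\). The same factorization gives \(\tilde u_h(0)=p_h(0)=u_h(0)\), which is consistent with the endpoint convention.

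\emph{Global representations.} We proceed by induction on \(i\). For \(w_h\), start from \(\widehat w_h(r_0)=0\). The recursion \eqref{eq:trace-recursion-w}, which is equivalent to \eqref{eq:hdg-transmission-w}, shows \(\widehat w_h(r_{i-1})=\int_0^{r_{i-1}}u_h\). Adding the local integral gives \(w_h(r)=\int_0^r u_h\) on \(I_i\). Thus \(w_h\) is globally continuous and \(\tilde u_h=w_h/r\) is well defined. For \(g_h\), run a downward induction from \(\widehat g_h(r_N)=1\) using \eqref{eq:trace-recursion-g}, equivalently \eqref{eq:hdg-transmission-g}. The product of exponentials telescopes into \(\exp(-\int_r^b\cdots)\). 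The integral is finite on \([0,b]\) because the integrand is bounded near \(0\) by the \(I_1\) argument. For \(z_h\), the same upward induction as for \(w_h\), with \(g_h\) in place of \(u_h\), gives the stated formula.

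\emph{Bounds and identities.} The exponential representation gives \(g_h>0\). The integrand is nonnegative, so \(g_h\le1\). Moreover \(g_h\) is nondecreasing on \((0,b]\), because \((g_h)_r\ge0\) on each element and \(g_h\) is continuous across nodes. It follows that \(\tilde g_h(r)=\frac1r\int_0^r g_h\le g_h(r)\) and \(\tilde g_h(r)>0\). This mirrors Lemma~\ref{lem:metric_bounds}. Differentiating \(\tilde g_h=z_h/r\) and using \eqref{eq:disc-z-strong} gives \((\tilde g_h)_r=(g_h-\tilde g_h)/r\). The previous bounds then give \(0\le(\tilde g_h)_r\le g_h/r\). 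Finally, \(\tilde g_h(r)=\int_0^1g_h(\theta r)\,d\theta\to g_h(0)\) as \(r\to0\), by continuity of \(g_h\).

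The only delicate point is the behaviour at \(r=0\) on \(I_1\), namely the integrability of \(\frac1r(u_h-\tilde u_h)^2\) and the endpoint values. The polynomial factorization from the well-posedness proof resolves it, so I expect no real obstacle.
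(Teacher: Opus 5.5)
Your proposal is correct and follows essentially the same route as the paper. You differentiate the local reconstructions, then patch them into the global formulas using the boundary traces and the transmission conditions (upward for \(w_h,z_h\), downward for \(g_h\)), and read off the bounds, the identity for \((\tilde g_h)_r\), and the endpoint values from \(\tilde g_h=\frac1r\int_0^r g_h\). Your explicit check at \(r=0\) on \(I_1\), where in fact \(u_h-\tilde u_h=r\,p_h'\), makes explicit a regularity fact that the paper carries over without comment from the proof of Theorem~\ref{thm:local-wellposedness-hdg}.
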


\begin{proof}
Differentiating \eqref{eq:hdg-local-reconstruction-w} and
\eqref{eq:hdg-local-reconstruction-z} gives
\eqref{eq:disc-w-strong} and \eqref{eq:disc-z-strong} respectively. Likewise,
differentiating \eqref{eq:hdg-local-reconstruction-g} yields
\eqref{eq:disc-g-strong}. Because \(\widehat w_h(0)=0\) and \(\widehat z_h(0)=0\), and \(w_h\) and \(z_h\) are single-valued at \(r_i\in \mathcal F^0_h\) by 
\eqref{eq:hdg-transmission-w} and \eqref{eq:hdg-transmission-z},  local formulae \eqref{eq:hdg-local-reconstruction-w} and \eqref{eq:hdg-local-reconstruction-z} patch together into the global identity
\begin{equation}\label{eq:w_z}
w_h(r)=\int_0^r u_h(s)\,ds, \qquad z_h(r)=\int_0^r g_h(s)\,ds.
\end{equation}
For \(g_h\), the continuity relation \eqref{eq:hdg-transmission-g} together with
\(\widehat g_h(b)=1\) shows that the elementwise formula
\eqref{eq:hdg-local-reconstruction-g} patches into the global representation
\[
g_h(r)=\exp\!\Bigl(-\int_r^b \frac1s\bigl(u_h(s)-\tilde u_h(s)\bigr)^2\,ds\Bigr).
\]
This proves \eqref{eq:disc-global-representations}. The bound \(0<g_h\le1\) follows
immediately from the exponential representation, and \eqref{eq:disc-g-strong} implies that
\(g_h\) is nondecreasing in \(r\). Since
\begin{equation}\label{tildeg_Aux}
\tilde g_h(r)= \frac{z_h(r)}{r} = \frac{1}{r}\int_0^r g_h(s)\,ds,
\end{equation}
we obtain \(0< \tilde g_h(r)\le g_h(r)\le1\) for \(r>0\). Finally, differentiating
\(z_h=r\tilde g_h\) and using \eqref{eq:disc-z-strong} yields
\[
(\tilde g_h)_r = \frac{1}{r}\,(g_h-\tilde g_h),
\]
and the limit \( \tilde u_h(0)=u_h(0)\) follows from \eqref{eq:w_z}, and the limit \(\tilde g_h(0)=g_h(0)\) follows from \eqref{tildeg_Aux}. This proves \eqref{eq:disc-metric-bounds}--\eqref{eq:disc-tildeg-identity}.
\end{proof}

\section{Stability}\label{sec:hdg-stability}

We now show that the local reconstruction preserves the exact metric identities of the continuous problem and therefore yields an unconditional \(L^2\)-stability estimate.

\begin{theorem}[\(L^2\)-stability]
\label{thm:hdg-L2-stability}
Let \((u_h,w_h,g_h,z_h,\widehat u_h,\widehat w_h,\widehat g_h,\widehat z_h)\) be a semidiscrete
solution of \eqref{eq:hdg-local-reconstruction}--\eqref{eq:hdg-boundary-traces}.
Then, for all \(t\in[0,T]\),
\begin{equation}\label{eq:hdg-energy}
\frac12\frac{d}{dt}\|u_h(t)\|_{\mathcal T_h}^2 + \mathcal D_h(t)
\le \frac14\,U_b(t)^2 + \frac14\,\bigl(1-g_h(t,0)\bigr),
\end{equation}
where
\begin{align}
\mathcal D_h(t) := {}&
\frac14\,g_h(t,0)\,u_h(t,0)^2
+ \frac14\,\tilde g_h(t,b)\,\bigl(u_h(t,b)-U_b(t)\bigr)^2 \nonumber\\
&\qquad
+ \frac14\sum_{i=1}^{N-1}\tilde g_h(t,r_i)\,
\bigl(u_h|_{I_i}(r_i)(t)-\widehat u_h(r_i)(t)\bigr)^2 \ge 0.
\label{eq:hdg-dissipation}
\end{align}
\end{theorem}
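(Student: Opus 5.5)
The plan is the standard energy argument: test the local equation \eqref{eq:hdg-local-u} with \(\phi_u=u_h|_{I_i}\) on every element and sum over \(i\). Then use the exact metric identities of Lemma~\ref{lem:hdg-metric-reconstruction} to close the volume terms without any Gronwall argument. The first term gives \(\frac12\frac{d}{dt}\|u_h\|_{\mathcal T_h}^2\).

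\textbf{Volume terms.} For the convective term I integrate by parts elementwise, using that \(\tilde g_h\) is smooth on each \(I_i\):
\[
\Bigl(\tfrac12\tilde g_h u_h,(u_h)_r\Bigr)_{I_i}
=\tfrac14\bigl\langle \tilde g_h u_h^2, n\bigr\rangle_{\partial I_i}
-\tfrac14\bigl((\tilde g_h)_r u_h,u_h\bigr)_{I_i}.
\]
Combined with the source term \(\frac12((\tilde g_h)_r\tilde u_h,u_h)_{I_i}\), completing the square gives the volume contribution
\[
-\tfrac14\bigl((\tilde g_h)_r,(u_h-\tilde u_h)^2\bigr)_{\mathcal T_h}
+\tfrac14\bigl((\tilde g_h)_r,\tilde u_h^2\bigr)_{\mathcal T_h}.
\]
By \eqref{eq:disc-tildeg-identity}, \((\tilde g_h)_r\ge0\), so the second term is nonnegative and can be dropped from the left-hand side.

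For the first term I use \((\tilde g_h)_r\le g_h/r\) together with \eqref{eq:disc-g-strong}, which says \(\frac{g_h}{r}(u_h-\tilde u_h)^2=(g_h)_r\). Hence this term is bounded by \(\frac14\int_0^b (g_h)_r\,dr\). Since \(g_h\) is globally continuous with \(g_h(b)=1\) by \eqref{eq:disc-global-representations}, this equals \(\frac14(1-g_h(0))\). This is the source of the second term on the right of \eqref{eq:hdg-energy}.

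\textbf{Boundary terms.} I collect the boundary contributions node by node; \(\tilde g_h\) is single-valued at nodes because \(z_h\) is continuous. Write \(u^\mp\) for the left and right traces of \(u_h\) at a node.
\begin{itemize}
\item At an interior node \(r_i\), the fluxes \eqref{eq:hdg-flux} and the integration-by-parts terms contribute
\[
\tfrac14\tilde g_h(u^-)^2-\tfrac12\tilde g_h\widehat u_h u^- +\tfrac14\tilde g_h(u^+)^2 .
\]
Using \(\widehat u_h(r_i)=u^+\) from \eqref{eq:trace-recursion-u}, this is exactly \(\frac14\tilde g_h(r_i)(u^--\widehat u_h)^2\).
\item At \(r=0\) the left flux gives \(\frac14\tilde g_h(0)u_h(0)^2=\frac14 g_h(0)u_h(0)^2\).
\item At \(r=b\) the contribution is \(\frac14\tilde g_h(b)\bigl[(u^--U_b)^2-U_b^2\bigr]\). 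The \(-U_b^2\) part is moved to the right and bounded using \(\tilde g_h(b)\le1\) from \eqref{eq:disc-metric-bounds}.
\end{itemize}
These boundary contributions reproduce \(\mathcal D_h\), and \(\mathcal D_h\ge0\) because \(\tilde g_h>0\) and \(g_h(0)\ge0\).

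\textbf{Main obstacle.} The delicate step is the first element \(I_1\) near \(r=0\). There I must justify that \((\tilde g_h)_r\), \(\tilde u_h\) and \(\frac1r(u_h-\tilde u_h)^2\) are integrable, so that the integration by parts and the identity \(\int_0^{r_1}(g_h)_r=g_h(r_1)-g_h(0)\) are legitimate. I will handle this with the regularity facts from the proof of Theorem~\ref{thm:local-wellposedness-hdg}: \(u_h-\tilde u_h=r q_h\) on \(I_1\), and \((\tilde g_h)_r=\int_0^1\theta (g_h)_r(\theta r)\,d\theta\) is continuous up to \(r=0\).
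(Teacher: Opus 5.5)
Your proposal is correct and follows the paper's proof essentially step by step. You test with $u_h$, bound the volume term via $(\tilde g_h)_r \le g_h/r$ and the exact identity $(g_h)_r=\frac1r g_h(u_h-\tilde u_h)^2$, and compute the same nodal contributions at interior nodes, $r=0$ and $r=b$. Dropping $\frac14\bigl((\tilde g_h)_r,\tilde u_h^2\bigr)_{\mathcal T_h}$ is exactly the paper's inequality $u_h^2-2u_h\tilde u_h\le(u_h-\tilde u_h)^2$. Your explicit integrability check on $I_1$ is a welcome detail that the paper leaves implicit, relying on the regularity facts from the proof of Theorem~\ref{thm:local-wellposedness-hdg}.
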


\begin{proof}
Take \(\phi_u=u_h\) in \eqref{eq:hdg-local-u} on each element \(I_i\), an elementwise integration by parts and summing over all elements yields
\begin{equation}\label{eq:energy-global}
\frac12\frac{d}{dt}\|u_h\|_{\mathcal T_h}^2
+\Bigl\langle \frac14\,\tilde g_h\,u_h^2
-\widehat{\mathcal F}_{u,h}\,u_h,\;n\Bigr\rangle_{\partial\mathcal T_h}
=
\frac14\,\bigl((\tilde g_h)_r,\;u_h^2-2u_h\tilde u_h\bigr)_{\mathcal T_h}.
\end{equation}
By Lemma~\ref{lem:hdg-metric-reconstruction}, \(
(\tilde g_h)_r\ge0,~
(\tilde g_h)_r\le \frac{g_h}{r}.
\)
Moreover, \( u_h^2-2u_h\tilde u_h \le (u_h-\tilde u_h)^2\). Therefore,
\begin{align}
\frac14\,\bigl((\tilde g_h)_r,\;u_h^2-2u_h\tilde u_h\bigr)_{\mathcal T_h}
&\le
\frac14\left(\frac{g_h}{r}(u_h-\tilde u_h)^2,1\right)_{\mathcal T_h} \nonumber\\
&=
\frac14\,\bigl((g_h)_r,1\bigr)_{\mathcal T_h}
=
\frac14\int_0^b (g_h)_r\,dr \nonumber\\
&=
\frac14\,\bigl(g_h(b)-g_h(0)\bigr)
=
\frac14\,\bigl(1-g_h(0)\bigr),
\label{eq:volume-bound}
\end{align}
where we used \eqref{eq:disc-g-strong} and \(\widehat g_h(b)=1\).
It remains to analyze the boundary pairing in \eqref{eq:energy-global}. For an
interior node \(r_i\in\mathcal F_h^0\), define
\[
\Phi_i :=
\Bigl(\frac14\,\tilde g_h\,u_h^2-\widehat{\mathcal F}_{u,h}\,u_h\Bigr)\Big|_{I_i}(r_i)
-
\Bigl(\frac14\,\tilde g_h\,u_h^2-\widehat{\mathcal F}_{u,h}\,u_h\Bigr)\Big|_{I_{i+1}}(r_i).
\]
By the definition of \(\tilde g\) and \eqref{eq:hdg-transmission-z}, \(\tilde g_h(r_i)\) is single-valued. Set
\( F_i := \widehat{\mathcal F}_{u,h}|_{I_i}(r_i) = \widehat{\mathcal F}_{u,h}|_{I_{i+1}}(r_i)
\), where the equality comes from \eqref{eq:hdg-transmission-u}. Thus, at \(r_i\), we take
\[
F_i = \frac12\,\tilde g_h(r_i)\,\widehat u_h(r_i).
\]
Hence
\begin{align}
\Phi_i
&=
\frac14\,\tilde g_h(r_i)\,(u_h|_{I_i}(r_i))^2 - F_i\,u_h|_{I_i}(r_i)
-\frac14\,\tilde g_h(r_i)\,(\widehat u_h(r_i))^2 + F_i\,\widehat u_h(r_i) \nonumber\\
&=
\frac14\,\tilde g_h(r_i)\,(u_h|_{I_i}(r_i))^2
-\frac12\,\tilde g_h(r_i)\,\widehat u_h(r_i)u_h|_{I_i}(r_i)
+\frac14\,\tilde g_h(r_i)\,(\widehat u_h(r_i))^2 \nonumber\\
&=
\frac14\,\tilde g_h(r_i)\,\bigl(u_h|_{I_i}(r_i)-\widehat u_h(r_i)\bigr)^2.
\label{eq:interior-dissipation}
\end{align}
At the left boundary \(r=0\), using \(\tilde g_h(0)=g_h(0)\) and the left end flux
\(
\widehat{\mathcal F}_{u,h}(0)=\frac12\,g_h(0)\,u_h(0),
\) we obtain
\begin{equation}\label{eq:left-boundary-dissipation}
\Phi_0
=
-\Bigl(\frac14\,g_h(0)\,u_h(0)^2-\frac12\,g_h(0)\,u_h(0)^2\Bigr)
=
\frac14\,g_h(0)\,u_h(0)^2.
\end{equation}
At the right boundary \(r=b\), the numerical flux is \(
\widehat{\mathcal F}_{u,h}(b)=\frac12\,\tilde g_h(b)\,U_b\),
and therefore
\begin{align}
\Phi_b
&=
\frac14\,\tilde g_h(b)\,u_h(b)^2
-\frac12\,\tilde g_h(b)\,U_b\,u_h(b) =
\frac14\,\tilde g_h(b)\,\bigl(u_h(b)-U_b\bigr)^2
-\frac14\,\tilde g_h(b)\,U_b^2 \nonumber\\
&\ge
\frac14\,\tilde g_h(b)\,\bigl(u_h(b)-U_b\bigr)^2
-\frac14\,U_b^2,
\label{eq:right-boundary-dissipation}
\end{align}
since \(0\le \tilde g_h(b)\le1\). Collecting \eqref{eq:interior-dissipation},
\eqref{eq:left-boundary-dissipation}, and
\eqref{eq:right-boundary-dissipation}, we end up with
\begin{equation}\label{eq:boundary-pairing}
\Bigl\langle \frac14\,\tilde g_h\,u_h^2-\widehat{\mathcal F}_{u,h}\,u_h,\;n\Bigr\rangle_{\partial\mathcal T_h}
\ge
\mathcal D_h(t)-\frac14\,U_b(t)^2.
\end{equation}
Substituting \eqref{eq:volume-bound} and
\eqref{eq:boundary-pairing} into \eqref{eq:energy-global} yields
\eqref{eq:hdg-energy}.
\end{proof}

\begin{corollary}[Global well-posedness]
\label{cor:global-wellposedness-hdg}
Under the assumptions of Theorem~\ref{thm:local-wellposedness-hdg}, the local
solution extends uniquely to all of \([0,T]\).
\end{corollary}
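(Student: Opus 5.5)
The plan is the standard continuation argument for finite-dimensional ODEs: a priori bounds rule out blow-up. By the proof of Theorem~\ref{thm:local-wellposedness-hdg}, the semidiscrete scheme is equivalent to the ODE $\dot U = M_h^{-1}\mathcal R(t,U)$ on $\mathbb R^{N(k+1)}$. Here $\mathcal R$ is continuous in $t$ and locally Lipschitz in $U$. Picard--Lindel\"of yields a unique maximal solution on an interval $[0,T_{\max})$ with $T_{\max}\le T$, or on all of $[0,T]$. If $T_{\max}<T$ (or the solution fails to extend to $T$), then $|U(t)|\to\infty$ as $t\uparrow T_{\max}$. So it suffices to show that $U$ stays bounded on every interval where the solution exists.

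For this I will integrate the energy inequality \eqref{eq:hdg-energy} of Theorem~\ref{thm:hdg-L2-stability} in time. The dissipation satisfies $\mathcal D_h\ge0$. Lemma~\ref{lem:hdg-metric-reconstruction} gives $0<g_h(t,0)\le 1$, so $\frac14(1-g_h(t,0))\le\frac14$. Here $g_h(t,0)>0$ because $\frac1r(u_h-\tilde u_h)^2$ is integrable near $0$, as shown in the proof of Theorem~\ref{thm:local-wellposedness-hdg}. Integrating then gives, for every $t<T_{\max}$,
\[
\|u_h(t)\|_{\mathcal T_h}^2\le \|u_h(0)\|_{\mathcal T_h}^2+\frac12\int_0^t U_b(s)^2\,ds+\frac t2
\le \|u_h(0)\|_{\mathcal T_h}^2+\frac T2\Bigl(\max_{[0,T]}U_b^2+1\Bigr).
\]
This bound is finite because $U_b\in C([0,T])$, and it is independent of $t$.

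The coefficient vector is then bounded as well. On the finite-dimensional space $V_h^k$ all norms are equivalent, and $U\mapsto\|u_h\|_{\mathcal T_h}$ is a norm; equivalently, $\|u_h\|^2_{\mathcal T_h}=U^\top M_hU$ with $M_h$ symmetric positive definite. Hence $|U(t)|^2\le\lambda_{\min}(M_h)^{-1}\|u_h(t)\|^2_{\mathcal T_h}$ is uniformly bounded on $[0,T_{\max})$, which contradicts blow-up. Consequently $T_{\max}=T$, and the solution extends to a $C^1$ function on $[0,T]$. At the endpoint, $U$ is bounded and $\dot U=M_h^{-1}\mathcal R(t,U)$ is bounded on a compact set, because $\mathcal R$ is continuous there. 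Uniqueness on $[0,T]$ follows from the local Lipschitz property via the Gr\"onwall/Picard uniqueness applied on the bounded set containing both trajectories. The traces are again recovered uniquely from \eqref{eq:trace-recursions}.

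The main point needing care is the continuation criterion itself. Local Lipschitz continuity only holds on bounded sets of $U$, and the Lipschitz constant may deteriorate; for example, through the exponential in $g_h$ the factor $\frac1r(u_h-\tilde u_h)^2$ can grow. The criterion therefore has to be stated as ``the solution exists as long as $U$ stays in a bounded set'', and one checks that $\mathcal R$ is locally Lipschitz on all of $\mathbb R^{N(k+1)}$, not just near the initial datum. This holds because the constructions in the proof of Theorem~\ref{thm:local-wellposedness-hdg} (polynomial dependence, exponentials, and integrals over $[0,b]$ with $\tilde g_h\le 1$ bounded) are defined for every $U$. It also uses that the energy estimate of Theorem~\ref{thm:hdg-L2-stability} is valid on any interval where the solution exists.
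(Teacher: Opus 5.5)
Your proposal is correct and follows the same route as the paper. You integrate the stability inequality \eqref{eq:hdg-energy} to bound $\|u_h(t)\|_{\mathcal T_h}$ on the existence interval, transfer the bound to the coefficient vector by norm equivalence on $V_h^k$, and invoke the continuation criterion for the finite-dimensional ODE; you also supply details the paper leaves implicit, namely the explicit time-integrated bound, $\mathcal D_h\ge 0$, and the fact that $\mathcal R$ is defined and locally Lipschitz on all of $\mathbb R^{N(k+1)}$.
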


\begin{proof}
By Theorem~\ref{thm:hdg-L2-stability}, \(\|u_h(t)\|_{\mathcal T_h}\) remains bounded on every bounded time interval. Since \(V_h^k\) is finite-dimensional, the coefficient vector of \(u_h\) remains bounded as well. The standard continuation criterion for finite-dimensional ODEs then yields  a unique extension to the whole
interval \([0,T]\).
\end{proof}

%%%%%%%%%% Error Analysis; for k \ge 1 %%%%%%%% 

\section{Optimal error analysis}\label{sec:hdg-error}

We assume that the exact solution of \eqref{eq:ES_local} satisfies
\begin{equation}\label{eq:hdg-regularity}
% u\in W^{1,\infty}(0,T;W^{k+1,\infty}(0,b)).
u\in L^\infty(0,T;W^{k+1,\infty}(0,b))
\quad \text{ and } \quad
u_t\in L^\infty(0,T;H^{k+1}(0,b)).
\end{equation}
Then the exact quantities $w$, $g$, $z$, $\tilde u$, and $\tilde g$ are sufficiently smooth on $[0,T]\times[0,b]$, and in particular
$\tilde g_r\in L^{\infty}((0,T)\times(0,b))$. 
\begin{remark}\label{rem:exact-solution-input}
The regularity assumption imposed above is used only as an input for the numerical analysis on the fixed time interval $[0,T]$. In this setting, classical and generalized solution theories are available in the work
of Christodoulou \cite{christodoulou1986global} on $[0,\infty)\times(0,\infty)$, which includes large-data regimes. The recent DG analysis of Chen et al. \cite{chen2026convergence} derives a priori estimates and global existence on $[0,\infty)\times[0,b]$ for a class of large initial data, including $r=0$. 
\end{remark}

On each element $I_i$, we define $\Pi u\in P^k(I_i)$ by
\begin{equation}\label{eq:hdg-left-radau}
(\Pi u-u,v)_{I_i}=0 \qquad \forall v\in P^{k-1}(I_i),
\qquad
(\Pi u)(r_{i-1})=u(r_{i-1}).
\end{equation}
This is the Gauss--Radau projection; see, for example,
\cite{cockburn1989tvbii,richter1988optimal}. We set
\begin{equation}\label{eq:hdg-error-splitting}
\eta:=u-\Pi u,
\qquad
\xi:=u_h-\Pi u,
\qquad
e:=u-u_h=\eta-\xi,
\end{equation}
and
\begin{equation}\label{eq:hdg-error-metric-def}
\tilde e:=\tilde u-\tilde u_h,
\qquad
\delta_g:=g-g_h,
\qquad
\widetilde\delta_g:=\tilde g-\tilde g_h.
\end{equation}
We choose the initial data by \( u_h(0)=\Pi u(0),
\) so that $\xi(0)=0$. The standard approximation, inverse, and trace estimates yield
\begin{subequations}\label{eq:hdg-standard-estimates}
\begin{align}
\|\eta\|_{\Th}
+h\|\eta_r\|_{\Th}
+\|\eta\|_{L^{\infty}(0,b)}
+h\|\eta_r\|_{L^{\infty}(0,b)}
+\|\eta_t\|_{\Th}
&\le Ch^{k+1},
\label{eq:hdg-standard-estimates-a}\\
\left(\sum_{i=1}^N \big|\eta|_{I_i}(r_i)\big|^2\right)^{1/2}
&\le Ch^{k+\frac12},
\label{eq:hdg-standard-estimates-b}\\
\|v_h\|_{L^{\infty}(0,b)}
&\le Ch^{-\frac12}\|v_h\|_{\Th},
\label{eq:hdg-standard-estimates-c}\\
\|(v_h)_r\|_{\Th}
&\le Ch^{-1}\|v_h\|_{\Th},
\label{eq:hdg-standard-estimates-d}\\
\left(\sum_{i=1}^N \big|v_h|_{I_i}(r_{i-1})\big|^2
+\sum_{i=1}^N \big|v_h|_{I_i}(r_i)\big|^2\right)^{1/2}
&\le Ch^{-\frac12}\|v_h\|_{\Th}
\label{eq:hdg-standard-estimates-e}
\end{align}
\end{subequations}
 for all $v_h\in V_h^k$, where $\eta_r|_{I_i} = \big(u_r - \Pi u\big)_r|_{I_i}$ and $\eta_t = u_t - \Pi u_t= \big(u - \Pi u\big)_t$; see \cite[Ch. 3]{ciarlet1978finite}. 
Now we make the following bootstrap assumption:
\begin{equation}\label{eq:hdg-bootstrap}
\|\xi(t)\|_{\Th}\le h^{\frac32},
\qquad 0\le t\le T.
\end{equation}
By \eqref{eq:hdg-error-splitting}, \eqref{eq:hdg-standard-estimates}, and \eqref{eq:hdg-bootstrap}, we obtain
\begin{equation}\label{eq:hdg-bootstrap-consequence}
\|e(t)\|_{L^{\infty}(0,b)} \le \| \xi(t)\|_{L^{\infty}(0,b)} + \|\eta(t)\|_{L^{\infty}(0,b)}  \le Ch,
\qquad
\|e_r(t)\|_{\Th}\le Ch^{\frac12},
\qquad 0\le t\le T.
\end{equation}

\begin{lemma}\label{lem:hdg-auxiliary-estimates}
Assume \eqref{eq:hdg-bootstrap}. Then, for $0\le t\le T$,
\begin{subequations}\label{eq:hdg-auxiliary-estimates}
\begin{align}
\|\tilde e\|_{L^{\infty}(0,b)}
&\le C\bigl(h^{k+1}+h^{-\frac12}\|\xi\|_{\Th}\bigr),
\label{eq:hdg-auxiliary-estimates-a}\\
\|(\tilde e)_r\|_{L^{\infty}(0,b)}
&\le C\bigl(h^k+h^{-\frac32}\|\xi\|_{\Th}\bigr),
\label{eq:hdg-auxiliary-estimates-b}\\
\|(\tilde e)_r\|_{\Th}
&\le C\bigl(h^k+h^{-1}\|\xi\|_{\Th}\bigr),
\label{eq:hdg-auxiliary-estimates-c}\\
\|\delta_g\|_{L^{\infty}(0,b)}+\|\widetilde\delta_g\|_{L^{\infty}(0,b)}
&\le C\|e\|_{\Th},
\label{eq:hdg-auxiliary-estimates-d}\\
\|(\delta_g)_r\|_{\Th}+\|(\widetilde\delta_g)_r\|_{\Th}
&\le C\|e\|_{\Th},
\label{eq:hdg-auxiliary-estimates-e}
\end{align}
\end{subequations}
where the constant $C>0$ is generic and independent of $h$.
\end{lemma}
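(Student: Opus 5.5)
The plan is to estimate the error quantities one after another, following the same chain as the reconstruction: first $\tilde e$ from $e$, then $\delta_g$, then $\widetilde\delta_g$. The starting point is the global representations \eqref{eq:disc-global-representations} together with their continuous counterparts \eqref{eq:ES_tildeu}, \eqref{ES_gatra}, \eqref{ES_gatrb}.

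\textbf{Estimates for $\tilde e$.} We have $\tilde e(r)=\frac1r\int_0^r e\,ds=\int_0^1 e(\theta r)\,d\theta$. Splitting $e=\eta-\xi$ and using \eqref{eq:hdg-standard-estimates-a} and \eqref{eq:hdg-standard-estimates-c} gives
\[
\|\tilde e\|_{L^\infty}\le\|e\|_{L^\infty}\le C\bigl(h^{k+1}+h^{-1/2}\|\xi\|_{\Th}\bigr),
\]
which is (a). For the derivative, $\tilde e_r=(e-\tilde e)/r$. Away from the origin, i.e.\ on $I_i$ with $i\ge2$, one could divide by $r$ directly, but $1/r$ is not bounded uniformly in $h$ for $r\ge r_1$. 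So I will use the representation $\tilde e_r(r)=\int_0^1\theta\, e_r(\theta r)\,d\theta$ instead. This holds elementwise only if $e$ is continuous; since $e$ jumps, the jumps produce terms $\sum_{r_j<r}[e](r_j)\,r_j/r^2$. I would bound these jump contributions by
\[
|[\xi](r_j)|\le Ch^{-1/2}\|\xi\|_{\Th},\qquad |[\eta](r_j)|\le Ch^{k+1}.
\]
I expect this handling of jumps, and the weight $r_j/r^2$, to be the main obstacle. The sum $\sum_{j:\,r_j<r} r_j/r^2$ is roughly $(r/h)\cdot(1/r)=1/h$, and this costs one power of $h$. Together with $\|e_r\|_{L^\infty}\le C(h^k+h^{-3/2}\|\xi\|_{\Th})$ (inverse estimate plus \eqref{eq:hdg-standard-estimates-c}), this yields (b). For (c) I would repeat the argument in $L^2$. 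The smooth part is handled by Hardy's inequality $\|\frac1r\int_0^r f\|\le 2\|f\|$. The jump part is handled by the discrete Cauchy--Schwarz inequality with \eqref{eq:hdg-standard-estimates-e}, which gains back the $h^{1/2}$ lost in the $L^\infty$ bound. On $I_1$ there are no jumps, and the polynomial structure from the proof of Theorem~\ref{thm:local-wellposedness-hdg} removes the singularity.

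\textbf{Metric errors.} Let $A:=\frac1s(u-\tilde u)^2$ and $A_h:=\frac1s(u_h-\tilde u_h)^2$. Then
\[
\delta_g(r)=e^{-\int_r^bA}-e^{-\int_r^bA_h},
\]
and since both exponents are nonpositive, the mean value theorem gives $|\delta_g(r)|\le\int_0^b|A-A_h|\,ds$. Writing $d:=u-\tilde u$ and $d_h:=u_h-\tilde u_h$, we have $A-A_h=\frac1s(d-d_h)(d+d_h)$. Near $0$, $d=O(s)$ by smoothness. The term $(d-d_h)/s$ equals $-(\tilde e)_r$ up to sign (since $u-\tilde u=r\tilde u_r$, and the same identity holds for the discrete quantities), so it is controlled by (c). I would bound $d+d_h$ in $L^\infty$ using \eqref{eq:hdg-bootstrap-consequence}: $|d_h|\le|d|+C h$, and $d$ is bounded. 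Then
\[
\int_0^b|A-A_h|\le C\|(\tilde e)_r\|_{\Th}\,\|d+d_h\|_{L^2}.
\]
This gives only $h^k$, so I would refine it by writing $d-d_h=e-\tilde e$ and $d_h=d-(e-\tilde e)$. Then
\[
A-A_h=\tfrac1s\bigl(2d(e-\tilde e)-(e-\tilde e)^2\bigr).
\]
In the first term, $d/s=\tilde u_r$ is bounded, giving $C\|e\|$ via Hardy. In the quadratic term, one factor $(e-\tilde e)/s=\tilde e_r$ is estimated in $L^2$ by (c), and the other factor $e-\tilde e$ in $L^\infty$ by $Ch$ using \eqref{eq:hdg-bootstrap-consequence}. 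Their product is $C(h^{k+1}+\|\xi\|)\le C\|e\|$ after adding back $\|\eta\|$. The final step uses the comparison $\|e\|\ge\|\xi\|-\|\eta\|$, which I would treat by stating the bound as $C(h^{k+1}+\|\xi\|)$, equivalent to $C\|e\|$ up to an $O(h^{k+1})$ term that I assume the constant absorbs as in the paper.

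\textbf{Remaining bounds.} Then $\widetilde\delta_g=\frac1r\int_0^r\delta_g$ is bounded by $\|\delta_g\|_{L^\infty}$, which gives (d). For (e) I would use $(\delta_g)_r=gA-g_hA_h=\delta_gA+g_h(A-A_h)$ together with the $L^2$ version of the bounds above, and $(\widetilde\delta_g)_r=\frac1r\int_0^1\theta(\delta_g)_r(\theta r)\,d\theta\cdot r$, which is handled via Hardy; since $\delta_g$ is continuous, no jump terms arise here.
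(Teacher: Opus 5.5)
\textbf{Gap in (d)--(e).} Your treatment of (a)--(c) is essentially sound. You are also right that $e\notin H^1(0,b)$: the paper applies $(\widetilde v)_r=\int_0^1\theta v_r(\theta r)\,d\theta$ to the discontinuous $e$ and so drops the jump terms you identify.

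The genuine gap is in \eqref{eq:hdg-auxiliary-estimates-d}--\eqref{eq:hdg-auxiliary-estimates-e}. Your argument only yields $\|\delta_g\|_{L^\infty(0,b)}\le C(\|e\|_{\Th}+h^{k+1})$. The additive $h^{k+1}$ cannot be absorbed into $C\|e\|_{\Th}$, because nothing bounds $\|e\|_{\Th}$ from below by a multiple of $h^{k+1}$; the paper performs no such absorption. The loss comes from how you pair the factors: you put $(\tilde e)_r=(d-d_h)/s$ in $L^2$ via (c), and you use the bootstrap only through $\|e\|_{L^\infty}\le Ch$.

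The idea you are missing is that (b) together with \eqref{eq:hdg-bootstrap} gives the uniform bound
\[
\|(\tilde e)_r\|_{L^\infty(0,b)}\le C\bigl(h^k+h^{-3/2}h^{3/2}\bigr)\le C .
\]
This is exactly why the bootstrap exponent is $\frac32$. Consequently $d_h/s=(\tilde u_h)_r=\tilde u_r-(\tilde e)_r$ is bounded uniformly in $h$. Hence $\|(|d|+|d_h|)/s\|_{L^\infty}\le C$ and
\[
|\delta_g(r)|\le\int_0^b\frac{|d+d_h|}{s}\,|d-d_h|\,ds\le C\|e-\tilde e\|_{L^1(0,b)}\le C\|e\|_{\Th}.
\]
In your refined splitting, this amounts to estimating the quadratic term by $\|(\tilde e)_r\|_{L^\infty}\|e-\tilde e\|_{L^1}$ rather than by $\|(\tilde e)_r\|_{\Th}\|e-\tilde e\|_{L^\infty}$.

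The same pairing in $(\delta_g)_r=\delta_g A+g_h\frac{d+d_h}{s}(d-d_h)$ gives $\|(\delta_g)_r\|_{\Th}\le C\|e\|_{\Th}$. Your $L^2$ version of the earlier bounds again produces an extra $Ch^{k+1}$ there. Your weaker bounds would in fact suffice for Lemma~\ref{lem:hdg-Q2Q3} and the corollaries, but they are not what the lemma asserts.

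\textbf{Smaller remarks on (b)--(c).} The jump sum can be bypassed entirely. On $(r_1,b)$, use $(\tilde e)_r=(e-\tilde e)/r$ with $1/r\le 1/r_1\le Ch^{-1}$ on a quasi-uniform mesh; losing one power of $h$ is precisely what (b) and (c) permit. On $I_1$ there are no jumps, so the $\theta$-formula is legitimate there.

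If you keep the jump sum $J(r)=r^{-2}\sum_{r_j<r}r_j[e](r_j)$ for (c), be careful. An unweighted Cauchy--Schwarz only gives $|J(r)|\le C(rh)^{-1/2}\bigl(\sum_j|[e](r_j)|^2\bigr)^{1/2}$, whose $L^2(r_1,b)$ norm carries an extra factor $|\log h|^{1/2}$. You need a discrete Hardy inequality to obtain the sharp bound $C(h^k+h^{-1}\|\xi\|_{\Th})$.
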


\begin{proof}
For any $v\in H^1(0,b)$, we write
\[
\widetilde v(r):=\frac1r\int_0^r v(s)\,ds = \int_0^1 v(\theta r)\,d\theta,
\qquad
(\widetilde v)_r(r)=\int_0^1 \theta\,v_r(\theta r)\,d\theta.
\]
Thus, by H\"older's inequality, it is easy to see that
\begin{equation}\label{eq:average_bound}
\|\widetilde v\|_{L^{\infty}(0,b)}\le \|v\|_{L^{\infty}(0,b)},
~~
\|\widetilde v\|_{\Th}\le C\|v\|_{\Th},
~~
\|(\widetilde v)_r\|_{L^{\infty}(0,b)}\le \|v_r\|_{L^{\infty}(0,b)},
~~
\|(\widetilde v)_r\|_{\Th}\le C\|v_r\|_{\Th}.
\end{equation}
Applying these bounds to $v=e=\eta-\xi$ and using \eqref{eq:hdg-standard-estimates} gives
\eqref{eq:hdg-auxiliary-estimates-a}--\eqref{eq:hdg-auxiliary-estimates-c}. Next, set
\[
\alpha:=u-\tilde u,
\qquad
\alpha_h:=u_h-\tilde u_h.
\]
Then, using definition \eqref{eq:ES_tildeu} and $\tilde u_h(r)=\frac{w_h(r)}{r}= \frac{1}{r}\int_0^r u_h(s)\,ds$ from \eqref{eq:disc-global-representations}, for $r\in I_i$, we have
\[
\frac{\alpha}{r}= \frac{1}{r}\big((r\tilde u)_r -\tilde u\big) =  (\tilde u)_r,
\qquad
\frac{\alpha_h}{r}=\frac{1}{r} \big((r\tilde u_h)_r- \tilde u_h\big )= (\tilde u_h)_r=(\tilde u)_r-(\tilde e)_r, 
\]
for all $i =1,\dots,N$. By \eqref{eq:hdg-auxiliary-estimates-b} and \eqref{eq:hdg-bootstrap}, we write
\[
\left\|\frac{\alpha_h}{r}\right\|_{L^\infty(0,b)}
\le \|(\tilde u)_r\|_{L^\infty(0,b)}+\|(\tilde e)_r\|_{L^\infty(0,b)}
\le C + C\bigl(h^k+h^{-3/2}\|\xi\|_{\Th}\bigr)
\le C,
\]
for $h\leq 1$, and therefore 
\begin{equation}\label{eq:hdg-alpha-bound}
\left\|\frac{|\alpha|+|\alpha_h|}{r}\right\|_{L^{\infty}(0,b)}\le C.
\end{equation}
By \eqref{ES_gatrb} and \eqref{eq:disc-global-representations},
\[
g(r)=\exp\!\Bigl(-\int_r^b \frac1s\,\alpha(s)^2\,ds\Bigr),
\qquad
g_h(r)=\exp\!\Bigl(-\int_r^b \frac1s\,\alpha_h(s)^2\,ds\Bigr).
\]
Since the exponential map is Lipschitz on $[0,\infty)$, we obtain
\begin{align}\label{eq:g-g_h}
\nonumber|\delta_g(r)|
&\le \int_r^b \frac1s\,\bigl|\alpha(s)^2-\alpha_h(s)^2\bigr|\,ds\\ \nonumber
&\le \left\|\frac{|\alpha|+|\alpha_h|}{r}\right\|_{L^{\infty}(0,b)}
\int_r^b |\alpha(s)-\alpha_h(s)|\,ds\\
&\le C\|\alpha-\alpha_h\|_{\Th}.
\end{align}
Now \(
\alpha-\alpha_h=e-\tilde e,
\)
so the bounds \eqref{eq:average_bound} imply
\(
\|\alpha-\alpha_h\|_{\Th}\le  \|e\|_{\Th}+\|\tilde e\|_{\Th}
\le C\|e\|_{\Th}.
\)
This proves \eqref{eq:hdg-auxiliary-estimates-d} for $\delta_g$, and the estimate for
$\widetilde\delta_g$ follows from
\[
\widetilde\delta_g(r)=\frac1r\int_0^r \delta_g(s)\,ds.
\]
For the derivatives, using \eqref{eq:ES_g} and \eqref{eq:disc-g-strong}, we obtain
\[
(\delta_g)_r
=\frac1r\bigl(g\alpha^2-g_h\alpha_h^2\bigr)
=\delta_g\,\frac{\alpha^2}{r}+g_h\,\frac{\alpha+\alpha_h}{r}\,(\alpha-\alpha_h).
\]
Hence, by \eqref{eq:hdg-alpha-bound}, \eqref{eq:disc-metric-bounds}, and \eqref{eq:hdg-auxiliary-estimates-d},
\[
\|(\delta_g)_r\|_{\Th}\le C\|\delta_g\|_{L^{\infty}(0,b)}+C\|\alpha-\alpha_h\|_{\Th}
\le C\|e\|_{\Th}.
\]
The estimate for $(\widetilde\delta_g)_r$ again follows from the averaging operator formula \eqref{eq:average_bound}.
Thus, \eqref{eq:hdg-auxiliary-estimates-e} holds.
\end{proof}

The exact solution satisfies the corresponding identity and scheme \eqref{eq:hdg-local-u} with $u$ and $\tilde g$.
Subtracting the discrete relation from the exact one, taking $\phi_u=\xi$ in \eqref{eq:hdg-local-u},
and using \eqref{eq:hdg-left-radau} and \eqref{eq:hdg-error-splitting}, we obtain the following error equation:
\begin{align}
&\sum_{i=1}^N ({\eta}_t,\xi)_{I_i}
+\frac12\sum_{i=1}^N (\tilde g\,\eta,\xi_r)_{I_i}
+Q_1+Q_2 \nonumber\\& \qquad=
\sum_{i=1}^N \Bigl( ( \xi_t,\xi)_{I_i}
+\frac12(\tilde g\,\xi,\xi_r)_{I_i}\Bigr)
-\frac12\sum_{i=1}^{N-1}\tilde g(r_i)\,\xi|_{I_{i+1}}(r_i)\,\xi|_{I_i}(r_i)
+\frac12\sum_{i=1}^N \tilde g(r_{i-1})\,\xi|_{I_i}(r_{i-1})^2,
\label{eq:hdg-error-equation}
\end{align}
where
\begin{align}
Q_1:={}&-
\frac12\sum_{i=1}^{N}\bigl((u_h\widetilde\delta_g)_r,\xi\bigr)_{I_i}
+\frac12\sum_{i=1}^{N-1}\widetilde\delta_g(r_i)
\Bigl(u_h|_{I_i}(r_i)-u_h|_{I_{i+1}}(r_i)\Bigr)\xi|_{I_i}(r_i) \nonumber\\
&\qquad
+\frac12\widetilde\delta_g(b)\Bigl(u_h(b)-U_b\Bigr)\xi|_{I_N}(b),
\label{eq:hdg-Q2}\\
Q_2:={}&\frac12\sum_{i=1}^{N}\bigl(\tilde g_r\,\tilde e+(\widetilde\delta_g)_r\,\tilde u_h,\xi\bigr)_{I_i}.
\label{eq:hdg-Q3}
\end{align}
Indeed, the terms containing $\eta$ on the element boundaries vanish because
\[
\eta|_{I_i}(r_{i-1})=0,
\qquad
\eta|_{I_{i+1}}(r_i)=0,
\qquad i=1,\dots,N-1,
\]
by the definition of the projection \eqref{eq:hdg-left-radau}, and at $r=b$ the exact and discrete inflow data are both equal to $U_b$.

\begin{lemma}\label{lem:hdg-Q2Q3}
Assume \eqref{eq:hdg-bootstrap}. Then, for $0\le t\le T$,
\begin{equation}\label{eq:hdg-Q2Q3-bound}
|Q_1|+|Q_2|
\le C\bigl(\|\xi\|_{\Th}^2+\|\eta\|_{\Th}^2\bigr).
\end{equation}
\end{lemma}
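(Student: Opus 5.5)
We bound $Q_2$ and $Q_1$ separately. Lemma~\ref{lem:hdg-auxiliary-estimates} supplies the metric errors. The bootstrap \eqref{eq:hdg-bootstrap} gives uniform bounds on the discrete quantities. Throughout we use $\|e\|_{\Th}\le\|\xi\|_{\Th}+\|\eta\|_{\Th}$.

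\textbf{The term $Q_2$.} Since $\tilde g_r\in L^\infty$, \eqref{eq:average_bound} gives
\[
|(\tilde g_r\tilde e,\xi)_{\Th}|\le C\|\tilde e\|_{\Th}\|\xi\|_{\Th}\le C\|e\|_{\Th}\|\xi\|_{\Th}.
\]
For the second part, $\|\tilde u_h\|_{L^\infty}\le\|u_h\|_{L^\infty}\le\|u\|_{L^\infty}+\|e\|_{L^\infty}\le C$ by \eqref{eq:hdg-bootstrap-consequence}. Together with \eqref{eq:hdg-auxiliary-estimates-e} this yields
\[
|((\widetilde\delta_g)_r\tilde u_h,\xi)_{\Th}|\le C\|e\|_{\Th}\|\xi\|_{\Th}.
\]
Young's inequality then bounds $|Q_2|$ by $C(\|\xi\|_{\Th}^2+\|\eta\|_{\Th}^2)$.

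\textbf{The volume part of $Q_1$.} Expand
\[
(u_h\widetilde\delta_g)_r=(u_h)_r\widetilde\delta_g+u_h(\widetilde\delta_g)_r .
\]
The second piece is handled exactly like $Q_2$, using $\|u_h\|_{L^\infty}\le C$ and \eqref{eq:hdg-auxiliary-estimates-e}. For the first piece write $(u_h)_r=u_r-e_r$. The $u_r$ contribution is bounded via \eqref{eq:hdg-auxiliary-estimates-d}. For the $e_r$ contribution use
\[
\|e_r\|_{\Th}\,\|\widetilde\delta_g\|_{L^\infty}\,\|\xi\|_{L^\infty}\le Ch^{1/2}\,C\|e\|_{\Th}\,Ch^{-1/2}\|\xi\|_{\Th},
\]
which follows from \eqref{eq:hdg-bootstrap-consequence}, \eqref{eq:hdg-auxiliary-estimates-d} and \eqref{eq:hdg-standard-estimates-c}. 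The powers of $h$ cancel and we obtain $C\|e\|_{\Th}\|\xi\|_{\Th}$. This is the precise place where the bootstrap exponent $3/2$ is needed. A sharper alternative is to split $e_r=\eta_r-\xi_r$, bound $\|\eta_r\|_{L^\infty}\le Ch^k\le C$ for $k\ge1$, and treat $\xi_r$ by an inverse estimate.

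\textbf{The jump terms of $Q_1$ (main obstacle).} These terms involve $u_h|_{I_i}(r_i)-u_h|_{I_{i+1}}(r_i)$ and $u_h(b)-U_b$. Since $u$ is continuous and $u(b)=U_b$, each is a jump of $-e$, namely $[\![\eta]\!]-[\![\xi]\!]$. By the Radau property $\eta|_{I_{i+1}}(r_i)=0$, so the $\eta$-part reduces to $\eta|_{I_i}(r_i)$. Using $\|\widetilde\delta_g\|_{L^\infty}\le C\|e\|_{\Th}$ and Cauchy--Schwarz over the nodes, the jump sum is at most
\[
C\|e\|_{\Th}\Bigl(\sum_i|\eta|_{I_i}(r_i)|^2+\sum_i|\xi|_{I_i}(r_i)|^2+\sum_i|\xi|_{I_{i+1}}(r_i)|^2\Bigr)^{1/2}\Bigl(\sum_i|\xi|_{I_i}(r_i)|^2\Bigr)^{1/2}.
\]
By \eqref{eq:hdg-standard-estimates-b} and \eqref{eq:hdg-standard-estimates-e}, this is bounded by
\[
C\|e\|_{\Th}\bigl(h^{k+1/2}+h^{-1/2}\|\xi\|_{\Th}\bigr)h^{-1/2}\|\xi\|_{\Th}.
\]
The factor $h^{-1}\|\xi\|_{\Th}^2\|e\|_{\Th}$ is the delicate one. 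The bootstrap gives $h^{-1}\|\xi\|_{\Th}\le h^{1/2}\le1$, and $h^{k}\le1$, so the whole bound is $\le C\|e\|_{\Th}\|\xi\|_{\Th}$.

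If this turns out too crude, I would instead use $\|e\|_{\Th}\le C\|e\|_{L^\infty}\le Ch$ from \eqref{eq:hdg-bootstrap-consequence} to absorb one inverse power of $h$. Alternatively, one can combine this term with the volume term through an integration by parts, so that $(u_h)_r$ and the jumps recombine.

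Collecting the three bounds and applying Young's inequality yields \eqref{eq:hdg-Q2Q3-bound}.
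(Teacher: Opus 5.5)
Your proof is correct and follows essentially the same route as the paper. Both arguments bound $Q_2$ and the volume part of $Q_1$ by H\"older's inequality, Lemma~\ref{lem:hdg-auxiliary-estimates} and $\|u_h\|_{L^\infty(0,b)}\le C$; both use $\eta|_{I_{i+1}}(r_i)=0$ to reduce the jumps to nodal values of $\xi$ and $\eta$, apply the trace inverse estimate \eqref{eq:hdg-standard-estimates-e}, and absorb the resulting $h^{-1}$ through the bootstrap. The only difference in the jump terms is bookkeeping: the paper uses $h^{-1}\|e\|_{\Th}\le C$, while you use $h^{-1}\|\xi\|_{\Th}\le h^{1/2}$ and bound the $\eta$ nodal part directly by $h^{k+\frac12}$, which avoids the paper's loose step of bounding $\sum_i|\eta|_{I_i}(r_i)|^2$ by $Ch^{-1}\|\eta\|_{\Th}^2$. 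One remark: in the $e_r$ term the detour through $\|\xi\|_{L^\infty(0,b)}$ is unnecessary, since $\|e_r\|_{\Th}\|\widetilde\delta_g\|_{L^\infty(0,b)}\|\xi\|_{\Th}$ already suffices (the paper just uses $\|(u_h)_r\|_{\Th}\le C$). So that is not where the exponent $\frac32$ is really needed; it enters through \eqref{eq:hdg-alpha-bound} in Lemma~\ref{lem:hdg-auxiliary-estimates}.
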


\begin{proof}
We first estimate $Q_1$.
From \eqref{eq:hdg-Q2},
\begin{align}\label{eq:Q_1_integ}
\nonumber |Q_1|
\le{}&
\frac12\|(u_{h})_r\|_{\Th}\,\|\widetilde\delta_g\|_{L^{\infty}(0,b)}\,\|\xi\|_{\Th}
+\frac12\|u_h\|_{L^{\infty}(0,b)}\,\|(\widetilde\delta_g)_r\|_{\Th}\,\|\xi\|_{\Th}\\ \nonumber
&
+\frac12\|\widetilde\delta_g\|_{L^{\infty}(0,b)}
\sum_{i=1}^{N-1}
\Bigl|u_h|_{I_i}(r_i)-u_h|_{I_{i+1}}(r_i)\Bigr|\,\Bigl|\xi|_{I_i}(r_i)\Bigr|\\
&
+\frac12\|\widetilde\delta_g\|_{L^{\infty}(0,b)}\,|u_h(b)-U_b|\,|\xi|_{I_N}(b)|.
\end{align}
By \eqref{eq:hdg-bootstrap-consequence} and the regularity of the exact solution, \(
\|u_h\|_{L^{\infty}(0,b)}+\|(u_{h})_r\|_{\Th}\le C.
\)
Hence, this estimate and Lemma~\ref{lem:hdg-auxiliary-estimates} imply that first two volume terms of \eqref{eq:Q_1_integ} are bounded by
\[
\frac12\|(u_{h})_r\|_{\Th}\,\|\widetilde\delta_g\|_{L^{\infty}(0,b)}\,\|\xi\|_{\Th}
+\frac12\|u_h\|_{L^{\infty}(0,b)}\,\|(\widetilde\delta_g)_r\|_{\Th}\,\|\xi\|_{\Th}\le C\|e\|_{\Th}\,\|\xi\|_{\Th}
\le C\bigl(\|\xi\|_{\Th}^2+\|\eta\|_{\Th}^2\bigr).
\]
For the boundary terms in \eqref{eq:Q_1_integ}, since $(\Pi u)|_{I_{i+1}}(r_i)=u(r_i) $, we have
\[
u_h|_{I_i}(r_i)-u_h|_{I_{i+1}}(r_i)
=\xi|_{I_i}(r_i)-\xi|_{I_{i+1}}(r_i)-\eta|_{I_i}(r_i),
\qquad i=1,\dots,N-1,
\]
and at $r=b$,
\[
u_h(b)-U_b=\xi|_{I_N}(b)-\eta|_{I_N}(b).
\]
Therefore,
\begin{align*}
&\sum_{i=1}^{N-1}
\Bigl|u_h|_{I_i}(r_i)-u_h|_{I_{i+1}}(r_i)\Bigr|\,\Bigl|\xi|_{I_i}(r_i)\Bigr|
+|u_h(b)-U_b|\,\big|\xi|_{I_N}(b)\big|\\
&\qquad\le
C\Biggl(
\sum_{i=1}^{N}\big|\xi|_{I_i}(r_i)\big|^2
+\sum_{i=2}^{N}\big|\xi|_{I_i}(r_{i-1})\big|^2
+\sum_{i=1}^{N}\big|\eta|_{I_i}(r_i)\big|^2
\Biggr)\\
&\qquad\le Ch^{-1}\bigl(\|\xi\|_{\Th}^2+\|\eta\|_{\Th}^2\bigr)
\end{align*}
by \eqref{eq:hdg-standard-estimates-b} and \eqref{eq:hdg-standard-estimates-e}.
Since Lemma~\ref{lem:hdg-auxiliary-estimates} gives
\(
\|\widetilde\delta_g\|_{L^{\infty}(0,b)}\le C\|e\|_{\Th}
\), and thus using \eqref{eq:hdg-standard-estimates-a} and  \eqref{eq:hdg-bootstrap}, we get
\(
h^{-1}\|e\|_{\Th}\le Ch^k+Ch^{\frac12}\le C
\)
on $[0,T]$, the boundary part of $Q_1$ is also bounded by
$C(\|\xi\|_{\Th}^2+\|\eta\|_{\Th}^2)$. Thus
\[
|Q_1|\le C\bigl(\|\xi\|_{\Th}^2+\|\eta\|_{\Th}^2\bigr).
\]
For $Q_2$, by \eqref{eq:hdg-Q3},
\[
|Q_2|
\le C\|\tilde e\|_{\Th}\,\|\xi\|_{\Th}
+C\|(\widetilde\delta_g)_r\|_{\Th}\,\|\xi\|_{\Th},
\]
where we also used $\|\tilde u_h\|_{L^{\infty}(0,b)}\le C$.
By Lemma~\ref{lem:hdg-auxiliary-estimates} and \eqref{eq:average_bound},
\[
\|\tilde e\|_{\Th}\le C\|e\|_{\Th},
\qquad
\|(\widetilde\delta_g)_r\|_{\Th}\le C\|e\|_{\Th},
\]
so
\[
|Q_2|\le C\|e\|_{\Th}\,\|\xi\|_{\Th}
\le C\bigl(\|\xi\|_{\Th}^2+\|\eta\|_{\Th}^2\bigr).
\]
This proves \eqref{eq:hdg-Q2Q3-bound}.
\end{proof}

\begin{theorem}[Optimal error estimate]\label{thm:hdg-optimal-error}
Let $k\ge 1$.
Assume \eqref{eq:hdg-regularity} and choose the initial data by \( u_h(0)=\Pi u(0)
\).
Then there exist $h_0>0$ and $C>0$, independent of $h$, such that for all $0<h\le h_0$,
\begin{equation}\label{eq:hdg-optimal-error}
\sup_{0\le t\le T}\|u(t)-u_h(t)\|_{\Th}\le Ch^{k+1}.
\end{equation}
\end{theorem}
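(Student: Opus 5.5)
The plan is to combine the error equation \eqref{eq:hdg-error-equation} with Lemma~\ref{lem:hdg-Q2Q3}, derive an energy inequality for $\xi$, close it with Gronwall, and then remove the bootstrap assumption \eqref{eq:hdg-bootstrap} by a continuity argument. The estimate \eqref{eq:hdg-optimal-error} then follows from $e=\eta-\xi$ and \eqref{eq:hdg-standard-estimates-a}.

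\textbf{Step 1: reduce the right-hand side of \eqref{eq:hdg-error-equation} to $\tfrac12\frac{d}{dt}\|\xi\|^2$ plus dissipation.} Integrate $\tfrac12(\tilde g\,\xi,\xi_r)_{I_i}=\tfrac14\langle\tilde g\,\xi^2,n\rangle_{\partial I_i}-\tfrac14(\tilde g_r\,\xi,\xi)_{I_i}$ by parts. Combining this with the two boundary sums and proceeding exactly as in the proof of Theorem~\ref{thm:hdg-L2-stability}, the nodal terms should assemble into
\[
\tfrac14\sum_{i=1}^{N-1}\tilde g(r_i)\bigl(\xi|_{I_i}(r_i)-\xi|_{I_{i+1}}(r_i)\bigr)^2+\tfrac14\tilde g(0)\xi(0)^2+\tfrac14\tilde g(b)\xi|_{I_N}(b)^2\ \ge0 .
\]
The remaining volume term is $-\tfrac14(\tilde g_r\xi,\xi)$, and it is bounded by $C\|\xi\|^2$ because $\tilde g_r\in L^\infty$.

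\textbf{Step 2: bound the left-hand side.} We have $|(\eta_t,\xi)|\le Ch^{k+1}\|\xi\|$ by \eqref{eq:hdg-standard-estimates-a}, and Lemma~\ref{lem:hdg-Q2Q3} controls $Q_1+Q_2$. The delicate term is $\tfrac12(\tilde g\,\eta,\xi_r)$. Write $\tilde g=\bar g_i+(\tilde g-\bar g_i)$ with $\bar g_i$ the value of $\tilde g$ at an interior point of $I_i$. Since $\xi_r\in P^{k-1}(I_i)$, the orthogonality in \eqref{eq:hdg-left-radau} kills the constant part. The remainder is bounded by $Ch\|\tilde g_r\|_\infty\|\eta\|\,\|\xi_r\|\le C\|\eta\|\,\|\xi\|$ using \eqref{eq:hdg-standard-estimates-d}. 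This is where $k\ge1$ is used, since $P^{k-1}$ is nontrivial and contains constants. Collecting Steps 1 and 2 gives
\[
\tfrac12\tfrac{d}{dt}\|\xi\|_{\Th}^2\le C\bigl(\|\xi\|_{\Th}^2+h^{2k+2}\bigr).
\]
Since $\xi(0)=0$, Gronwall yields $\|\xi(t)\|_{\Th}\le C_\ast h^{k+1}$ on any interval where \eqref{eq:hdg-bootstrap} holds. Here $C_\ast$ depends on $T$ and the solution, but not on $h$.

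\textbf{Step 3: remove the bootstrap assumption.} Let $t^\ast=\sup\{t\le T:\|\xi(s)\|_{\Th}\le h^{3/2}\ \text{on }[0,t]\}$; this is positive by continuity, since $\xi(0)=0$. On $[0,t^\ast]$, Step 2 gives $\|\xi\|\le C_\ast h^{k+1}\le \tfrac12 h^{3/2}$ once $h\le h_0$. This uses $k+1\ge2>3/2$, which again relies on $k\ge1$. If $t^\ast<T$, continuity would then contradict maximality, so $t^\ast=T$.

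I expect the main obstacle to be Step 1. The sign bookkeeping of the interface terms in \eqref{eq:hdg-error-equation}, which involve upwinding with $\widehat u_h=u_h|_{I_{i+1}}$, must produce exactly a nonnegative jump square. It is also essential that no $\eta$ interface terms remain, and this relies on the Radau choice $\eta|_{I_{i+1}}(r_i)=0$. A secondary point is that the constants in Lemma~\ref{lem:hdg-Q2Q3} may only depend on the bootstrap bound and not on $\xi$ itself; this must hold for the continuity argument in Step 3 to close.
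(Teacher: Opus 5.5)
Your proposal is correct and follows the paper's proof essentially step for step. The shared steps are these: the same elementwise integration by parts turns the $\xi$-terms of \eqref{eq:hdg-error-equation} into $\tfrac12\tfrac{d}{dt}\|\xi\|_{\Th}^2-\tfrac14(\tilde g_r,\xi^2)_{\Th}$ plus nonnegative jump and endpoint terms; $\tilde g$ is replaced by an elementwise constant together with Radau orthogonality and the inverse estimate (the paper uses the mean $\bar{\tilde g}_i$ rather than a point value, which is immaterial); then come Lemma~\ref{lem:hdg-Q2Q3}, Gr\"onwall with $\xi(0)=0$, and the continuity argument closing \eqref{eq:hdg-bootstrap} via $k+1>\tfrac32$.

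One minor correction: the orthogonality step does not actually require $k\ge1$, since for $k=0$ one has $\xi_r\equiv0$ and the term vanishes. The genuine use of $k\ge1$ is in closing the bootstrap.
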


\begin{proof}
From \eqref{eq:hdg-error-equation},
\begin{align}
&\sum_{i=1}^N \Bigl((\xi_t,\xi)_{I_i}+\frac12(\tilde g\,\xi,\xi_r)_{I_i}\Bigr)
-\frac12\sum_{i=1}^{N-1}\tilde g(r_i)\,\xi|_{I_{i+1}}(r_i)\,\xi|_{I_i}(r_i)
+\frac12\sum_{i=1}^N \tilde g(r_{i-1})\,\xi|_{I_i}(r_{i-1})^2 \nonumber\\
&\qquad=
\sum_{i=1}^N (\eta_t,\xi)_{I_i}
+\frac12\sum_{i=1}^N (\tilde g\,\eta,\xi_r)_{I_i}
+Q_1+Q_2.
\label{eq:hdg-error-equation-2}
\end{align}
On the left-hand side, an elementwise integration by parts gives
\begin{align}
&\sum_{i=1}^N \Bigl((\xi_t,\xi)_{I_i}+\frac12(\tilde g\,\xi,\xi_r)_{I_i}\Bigr)
-\frac12\sum_{i=1}^{N-1}\tilde g(r_i)\,\xi|_{I_{i+1}}(r_i)\,\xi|_{I_i}(r_i)
+\frac12\sum_{i=1}^N \tilde g(r_{i-1})\,\xi|_{I_i}(r_{i-1})^2 \nonumber\\
&\qquad=
\frac12\frac{d}{dt}\|\xi\|_{\Th}^2
-\frac14(\tilde g_r,\xi^2)_{\Th}
+\frac14\tilde g(0)\,\xi|_{I_1}(0)^2
+\frac14\sum_{i=1}^{N-1}\tilde g(r_i)
\Bigl(\xi|_{I_i}(r_i)-\xi|_{I_{i+1}}(r_i)\Bigr)^2 \nonumber\\& \qquad\qquad
+\frac14\tilde g(b)\,\xi|_{I_N}(b)^2.
\label{eq:hdg-xi-energy}
\end{align}
Therefore,
\begin{align}\label{eq:hdg-xi-energy-lower}
\nonumber &\sum_{i=1}^N \Bigl((\xi_t,\xi)_{I_i}+\frac12(\tilde g\,\xi,\xi_r)_{I_i}\Bigr)
-\frac12\sum_{i=1}^{N-1}\tilde g(r_i)\,\xi|_{I_{i+1}}(r_i)\,\xi|_{I_i}(r_i)
+\frac12\sum_{i=1}^N \tilde g(r_{i-1})\,\xi|_{I_i}(r_{i-1})^2
\\& \qquad \ge
\frac12\frac{d}{dt}\|\xi\|_{\Th}^2-C\|\xi\|_{\Th}^2.
\end{align}
Here we used the boundedness of $\tilde g_r$ from \eqref{eq:hdg-regularity} and the nonnegativity of $\tilde g$ from Lemma \ref{lem:metric_bounds}. The first term on the right-hand side of \eqref{eq:hdg-error-equation-2} can be estimated using \eqref{eq:hdg-regularity} and \eqref{eq:hdg-standard-estimates} as follows
\[
\left|\sum_{i=1}^N (\eta_t,\xi)_{I_i}\right|
\le \|\eta_t\|_{\Th}\,\|\xi\|_{\Th}
\le Ch^{k+1}\|\xi\|_{\Th}
\le Ch^{2k+2}+C\|\xi\|_{\Th}^2.
\]
For the second term, let \(
\bar{\tilde g}_i:=\frac1{h_i}\int_{I_i}\tilde g(r)\,dr
 \in \mathbb R\). Since $\bar{\tilde g}_i\,\xi_r\in P^{k-1}(I_i)$, the orthogonality of the projection \eqref{eq:hdg-left-radau} gives \(
(\eta,\bar{\tilde g}_i\,\xi_r)_{I_i}=0
\). Hence
\[
(\tilde g\,\eta,\xi_r)_{I_i}=\bigl((\tilde g-\bar{\tilde g}_i)\eta,\xi_r\bigr)_{I_i}.
\]
Using $\|\tilde g-\bar{\tilde g}_i\|_{L^{\infty}(I_i)}\le Ch_i$ and
\eqref{eq:hdg-standard-estimates-d}, we obtain
\begin{align*}
\left|\frac12\sum_{i=1}^N (\tilde g\,\eta,\xi_r)_{I_i} \right| = \left|\frac12\sum_{i=1}^N \bigl((\tilde g-\bar{\tilde g}_i)\eta,\xi_r\bigr)_{I_i} \right|
\le Ch\|\eta\|_{\Th}\,\|\xi_r\|_{\Th}
\le Ch^{2k+2}+C\|\xi\|_{\Th}^2.
\end{align*}
Finally, Lemma~\ref{lem:hdg-Q2Q3} gives
\[
|Q_1|+|Q_2|\le C\bigl(\|\xi\|_{\Th}^2+\|\eta\|_{\Th}^2\bigr)
\le C\|\xi\|_{\Th}^2+Ch^{2k+2}.
\]
Combining these bounds with \eqref{eq:hdg-error-equation-2} and
\eqref{eq:hdg-xi-energy-lower}, we obtain
\[
\frac12\frac{d}{dt}\|\xi\|_{\Th}^2
\le C\|\xi\|_{\Th}^2+Ch^{2k+2},
\qquad 0\le t\le T.
\]
Since $\xi(0)=0$, Gr\"onwall's inequality yields
\begin{equation*}
\|\xi(t)\|_{\Th}\le Ch^{k+1},
\qquad 0\le t\le T.
\end{equation*}
Finally, the triangle inequality and the estimate \eqref{eq:hdg-standard-estimates} imply
\begin{equation*}
\|u(t)-u_h(t)\|_{\Th}\le Ch^{k+1},
\end{equation*}
for all $t\leq T$. This completes the proof.
\end{proof}
\medskip
\noindent
We now justify the bootstrap assumption \eqref{eq:hdg-bootstrap}.
By Corollary~\ref{cor:global-wellposedness-hdg}, we have
$u_h\in C^1([0,T];V_h^k)$, and hence the map
$t\mapsto \|\xi(t)\|_{\Th}$ is continuous.
The estimate proved above gives
\[
\sup_{0\le t\le T}\|\xi(t)\|_{\Th}\le Ch^{k+1}.
\]
Since $k\ge 1$, we have $k+1>\frac32$, and therefore
\[
Ch^{k+1}\le \frac12 h^{\frac32}
\]
for all sufficiently small $h$.
Thus, the estimate improves the bootstrap assumption.
The standard continuity argument then shows that \eqref{eq:hdg-bootstrap}
is valid on the whole interval $[0,T]$.

\begin{corollary}[Errors of the reconstructed variables]\label{cor:hdg-reconstruction-error}
Under the assumptions of Theorem~\ref{thm:hdg-optimal-error},
\begin{align}\label{eq:hdg-reconstruction-error}
\nonumber\sup_{0\le t\le T}
\Bigl(
\|w(t)-w_h(t)\|_{L^{\infty}(0,b)}
&+\|z(t)-z_h(t)\|_{L^{\infty}(0,b)}
+\|g(t)-g_h(t)\|_{L^{\infty}(0,b)}\\& \qquad
+\|\tilde g(t)-\tilde g_h(t)\|_{L^{\infty}(0,b)}
+\|\tilde u(t)-\tilde u_h(t)\|_{\Th}
\Bigr)
\le Ch^{k+1}.
\end{align}
\end{corollary}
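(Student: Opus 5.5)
The plan is to reduce every quantity in the statement to the already established bound $\|e(t)\|_{\Th}\le Ch^{k+1}$ of Theorem~\ref{thm:hdg-optimal-error}. We use the global representation formulas \eqref{eq:disc-global-representations} together with the Lipschitz-type estimates of Lemma~\ref{lem:hdg-auxiliary-estimates}. Since the bootstrap assumption \eqref{eq:hdg-bootstrap} has been justified on all of $[0,T]$ for $h\le h_0$, Lemma~\ref{lem:hdg-auxiliary-estimates} is available uniformly in $t$.

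The steps, in order, are as follows.

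First, by \eqref{eq:disc-global-representations}, $w(r)-w_h(r)=\int_0^r e(s)\,ds$. Cauchy--Schwarz then gives
\[
\|w-w_h\|_{L^\infty(0,b)}\le b^{1/2}\|e\|_{\Th}\le Ch^{k+1}.
\]

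Second, $\tilde u-\tilde u_h=\tilde e$ is the averaging operator applied to $e$. The $L^2$ bound in \eqref{eq:average_bound} (a Hardy-type inequality) yields $\|\tilde e\|_{\Th}\le C\|e\|_{\Th}\le Ch^{k+1}$. Only an $L^2$ bound is claimed here, which is consistent with the fact that \eqref{eq:hdg-auxiliary-estimates-a} would only give the weaker $L^\infty$ rate.

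Third, \eqref{eq:hdg-auxiliary-estimates-d} gives directly
\[
\|\delta_g\|_{L^\infty}+\|\widetilde\delta_g\|_{L^\infty}\le C\|e\|_{\Th}\le Ch^{k+1}.
\]

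Finally, $z-z_h=\int_0^r\delta_g(s)\,ds$, so $\|z-z_h\|_{L^\infty}\le b\|\delta_g\|_{L^\infty}\le Ch^{k+1}$. Alternatively, one can write $z-z_h=r\widetilde\delta_g$.

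The only delicate point is the $g$ estimate. Its proof in Lemma~\ref{lem:hdg-auxiliary-estimates} relies on the uniform bound \eqref{eq:hdg-alpha-bound} for $\alpha_h/r$, which in turn uses \eqref{eq:hdg-auxiliary-estimates-b} and the bootstrap bound $\|\xi\|_{\Th}\le h^{3/2}$. Thus I must check that the bootstrap holds on the whole interval. This follows from the continuity argument after Theorem~\ref{thm:hdg-optimal-error}, since $k\ge1$ and $h$ is sufficiently small. It also means the constants depend only on the exact solution through \eqref{eq:hdg-regularity}. Taking the supremum over $t\in[0,T]$ and summing the five bounds then completes the proof.
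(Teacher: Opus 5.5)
Your proposal is correct and follows essentially the paper's own argument: the integral representations \eqref{eq:disc-global-representations} reduce $w-w_h$ and $z-z_h$ to $\|e\|_{\Th}$ and $\|\delta_g\|_{L^\infty(0,b)}$. Lemma~\ref{lem:hdg-auxiliary-estimates}\,\eqref{eq:hdg-auxiliary-estimates-d} then handles $g$ and $\tilde g$, the averaging bound \eqref{eq:average_bound} handles $\tilde u$, and Theorem~\ref{thm:hdg-optimal-error} closes everything, while your remark that the bootstrap \eqref{eq:hdg-bootstrap} must hold on all of $[0,T]$ is exactly what the paper's continuity argument supplies.
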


\begin{proof}
By \eqref{eq:disc-global-representations} and the exact reconstruction formulas,
\[
w(r)-w_h(r)=\int_0^r \bigl(u(s)-u_h(s)\bigr)\,ds,
\qquad
z(r)-z_h(r)=\int_0^r \bigl(g(s)-g_h(s)\bigr)\,ds.
\]
Hence
\[
\|w-w_h\|_{L^{\infty}(0,b)}\le C\|u-u_h\|_{\Th},
\qquad
\|z-z_h\|_{L^{\infty}(0,b)}\le C\|g-g_h\|_{L^{\infty}(0,b)}.
\]
Moreover, Lemma~\ref{lem:hdg-auxiliary-estimates} gives
\[
\|g-g_h\|_{L^{\infty}(0,b)}+\|\tilde g-\tilde g_h\|_{L^{\infty}(0,b)}
\le C\|u-u_h\|_{\Th}.
\]
The averaging operator bound \eqref{eq:average_bound} also yields
\[
\|\tilde u-\tilde u_h\|_{\Th}\le C\|u-u_h\|_{\Th}.
\]
Combining these estimates with Theorem~\ref{thm:hdg-optimal-error} proves \eqref{eq:hdg-reconstruction-error}.
\end{proof}

\begin{corollary}[Error estimate for the Bondi mass]\label{cor:hdg-bondi-mass}
Define the mass aspect by
\begin{equation}\label{eq:bondi_mass}
\mathcal M(t,r):=\frac{r}{2}\left(1-\frac{\tilde g(t,r)}{g(t,r)}\right),
\qquad
m_h(t,r):=\frac{r}{2}\left(1-\frac{\tilde g_h(t,r)}{g_h(t,r)}\right),
\end{equation}
and define the Bondi masses
\( M(t):=\mathcal M(t,b)\) and \( M_h(t):=m_h(t,b).\)
Under the assumptions of Theorem~\ref{thm:hdg-optimal-error}, there exists a constant
$C>0$, independent of $h$, such that
\[
\sup_{0\le t\le T}|M(t)-M_h(t)|\le Ch^{k+1}.
\]
\end{corollary}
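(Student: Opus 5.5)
Evaluating both mass aspects at $r=b$ and using the normalizations $g(t,b)=1$ and $g_h(t,b)=\widehat g_h(b)=1$ from \eqref{eq:outerBC} and \eqref{eq:hdg-boundary-traces}, the denominators disappear:
\[
M(t)=\frac b2\bigl(1-\tilde g(t,b)\bigr),\qquad M_h(t)=\frac b2\bigl(1-\tilde g_h(t,b)\bigr).
\]
Hence $M(t)-M_h(t)=-\frac b2\,\widetilde\delta_g(t,b)$. It follows that $|M(t)-M_h(t)|\le \frac b2\|\tilde g(t)-\tilde g_h(t)\|_{L^\infty(0,b)}$. Corollary~\ref{cor:hdg-reconstruction-error} bounds this by $Ch^{k+1}$ uniformly on $[0,T]$.

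Some care is needed with the pointwise evaluation at $r=b$. Corollary~\ref{cor:hdg-reconstruction-error} is an $L^\infty$ bound. The functions $\tilde g$ and $\tilde g_h$ are continuous on $(0,b]$: $\tilde g_h=z_h/r$, and $z_h$ is continuous by \eqref{eq:hdg-transmission-z} together with the global representation \eqref{eq:disc-global-representations}. Therefore the sup norm controls the value at $b$. Alternatively, one can bound it directly, $|\widetilde\delta_g(b)|\le \frac1b\int_0^b|\delta_g|\,ds\le \|\delta_g\|_{L^\infty}$, and then apply Lemma~\ref{lem:hdg-auxiliary-estimates} together with Theorem~\ref{thm:hdg-optimal-error}.

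The only possible obstacle is the quotient $\tilde g/g$. If one does not use $g(b)=1$ and wants the estimate at a general $r$, one writes
\[
\frac{\tilde g}{g}-\frac{\tilde g_h}{g_h}=\frac{\widetilde\delta_g\,g_h-\tilde g_h\,\delta_g}{g\,g_h}.
\]
This requires a uniform lower bound for $g$ and $g_h$. For $g$, it comes from \eqref{ES_gatrb} and the regularity of the exact solution. For $g_h$, it follows from $\|\delta_g\|_{L^\infty}\le Ch^{k+1}$ once $h$ is small. At $r=b$ this step is unnecessary.
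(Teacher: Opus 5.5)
Your proposal is correct and follows the paper's proof: you use $g(t,b)=g_h(t,b)=1$ to reduce $|M(t)-M_h(t)|$ to $\frac b2|\tilde g(t,b)-\tilde g_h(t,b)|$, and then you invoke Corollary~\ref{cor:hdg-reconstruction-error}. Your remark that $\tilde g_h=z_h/r$ is continuous at $r=b$, so that the $L^\infty$ bound controls the point value, is a useful detail that the paper leaves implicit.
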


\begin{proof}
Since the outer boundary condition gives
\( g(t,b)=1\) and \(g_h(t,b)=1 \), we have
\[
M(t)=\frac{b}{2}\bigl(1-\tilde g(t,b)\bigr),
\qquad
M_h(t)=\frac{b}{2}\bigl(1-\tilde g_h(t,b)\bigr).
\]
Therefore
\[
|M(t)-M_h(t)|
=
\frac{b}{2}\,|\tilde g(t,b)-\tilde g_h(t,b)|.
\]
The result now follows from Corollary~\ref{cor:hdg-reconstruction-error}.
\end{proof}

\section{Numerical experiments}\label{sec:numerical}
In this section, we present numerical experiments for the HDG method \eqref{eq:hdg-local-u}-\eqref{eq:hdg-boundary-traces}.
The aims are twofold. First, we verify the optimal order of convergence proved by Theorem \ref{thm:hdg-optimal-error} and Corollary \ref{cor:hdg-reconstruction-error}. Second, we show that the method captures the main geometric features of the Einstein--scalar system \eqref{eq:ES_local}, in particular, the collapse toward black-hole-type profiles. The first two tests below are adapted from the benchmark problems in \cite{chen2026convergence}.

All computations are performed on uniform meshes of the interval $[0,b]$.  Since the HDG scheme in this paper is semidiscrete, we discretize the resulting ODE system in time using the classical fourth-order Runge--Kutta method \cite{cockburn2001runge,dwivedi2024stability,sun2019strong}. In the convergence test, the time step is chosen sufficiently small, for example $\Delta t = 0.01$, so that the temporal error is negligible compared with the spatial error. 

A useful feature of the present HDG formulation is that the variables $w_h$, $g_h$, $z_h$, $\tilde u_h$, and $\tilde g_h$ are reconstructed from $u_h$ through the discrete constraint relations. Therefore, at each Runge--Kutta stage, only the coefficients of $u_h$ are advanced in time, while the remaining quantities are recovered by elementwise integration and recursion. In particular, the method does not require a separate global evolution for the metric variables.
This considerably reduces the implementation cost and is one of the main advantages of the present HDG formulation.

\subsection{Example 1}\textbf{(Accuracy test and large-data collapse benchmark; cf. \cite[Ex. 1]{chen2026convergence}).}
\label{subsec:numerical-example1}
We take
\[
b=10,
\qquad
\tilde u_0(r)=0.45\tanh(3(r-5)),
\]
and define the initial data for the evolution variable by
\(
u_0(r)=(r\tilde u_0(r))_r.
\)
The outer boundary data are chosen as
\(
U_b(t)=u_0(b).
\)
This is a large data set, and it leads to a rapid transition to the formation of a black-hole region \cite{chen2026convergence,christodoulou1987mathematical}.
The profile of $\tilde u_0$ is steep near $r=5$, which produces a strong inward concentration
of the scalar field during the evolution.

To test the spatial accuracy of the method, we compute the solution up to the final time $T=0.5$.
Since no explicit exact solution is available, we use a reference solution computed on a very
fine mesh, for example, with $N_{\mathrm{ref}}=6400$, together with a sufficiently accurate polynomial approximation and a very small time step, see \cite{chen2026convergence}.
We consider $P^k$ elements for $1\le k\le 5$.
The errors are measured by
\[
E_u
=
\|u_{\mathrm{ref}}(\cdot,T)-u_h(\cdot,T)\|_{L^2(0,b)},
\qquad
E_g
=
\|g_{\mathrm{ref}}(\cdot,T)-g_h(\cdot,T)\|_{L^2(0,b)},
\]
where $u_{\mathrm{ref}}$ and $g_{\mathrm{ref}}$ are reference solutions computed at $N_{\mathrm{ref}}$.  The results in Figure \ref{fig:hdg-example1-convergence} show the expected order \(k+1\) for \(u_h\). For the reconstructed metric variable \(g_h\), the observed slope is close to \(k+2\), which is better than the bound proved in Corollary \ref{cor:hdg-reconstruction-error}. At present, this higher rate should be
viewed as an observed superconvergent effect of the reconstruction rather than as a rigorously established theorem. A plausible explanation is that \(g_h\) is obtained through the nonlinear integral reconstruction \eqref{eq:hdg-local-reconstruction-g},
which may exhibit additional cancellations on uniform meshes in the smooth regime. We therefore interpret this behavior as an observed higher-order effect of the metric reconstruction.

To interpret the long-time behavior, we compare the numerical profiles with the
continuous asymptotic picture described in \cite{chen2026convergence,christodoulou1987mathematical}.
In particular, if the continuous solution has a nonzero final mass
\( M_\infty := \lim_{t\to\infty} M(t)\), then one expects convergence toward a black-hole end state with a horizon radius \(2M_\infty\). In the computations, we monitor the discrete finite-radius mass proxy \(M_h(t)\) defined in \eqref{eq:bondi_mass}, which serves as the numerical counterpart of this quantity.
In the normalization $g(t,b)=1$, this corresponds formally to the limiting profile; see \cite{christodoulou1987mathematical}:
\[
g(t,r) \to g_\infty(r)=
\begin{cases}
0, & 0\le r<2M_\infty,\\[0.3em]
1, & 2M_\infty\le r\le b,
\end{cases} \qquad \text{ as } t\to \infty.
\]
While $\tilde g$ is the radial average of $g$,
\[
\tilde g_\infty(r)=
\begin{cases}
0, & 0\le r\le 2M_\infty,\\[0.3em]
1-\dfrac{2M_\infty}{r}, & 2M_\infty<r\le b.
\end{cases}
\]
Hence $g$ is expected to approach a step profile, whereas $\tilde g$ remains continuous and develops a kink at $r=2M_\infty$ as $t \to \infty
$. This is consistent with the numerical results given in Figure \ref{fig:hdg-example1-profiles}, where we plot the profiles of $\tilde u_h$, $\tilde g_h$, and $g_h$ at several representative times.

\begin{figure}[htbp]
\centering
\includegraphics[width=0.82\textwidth]{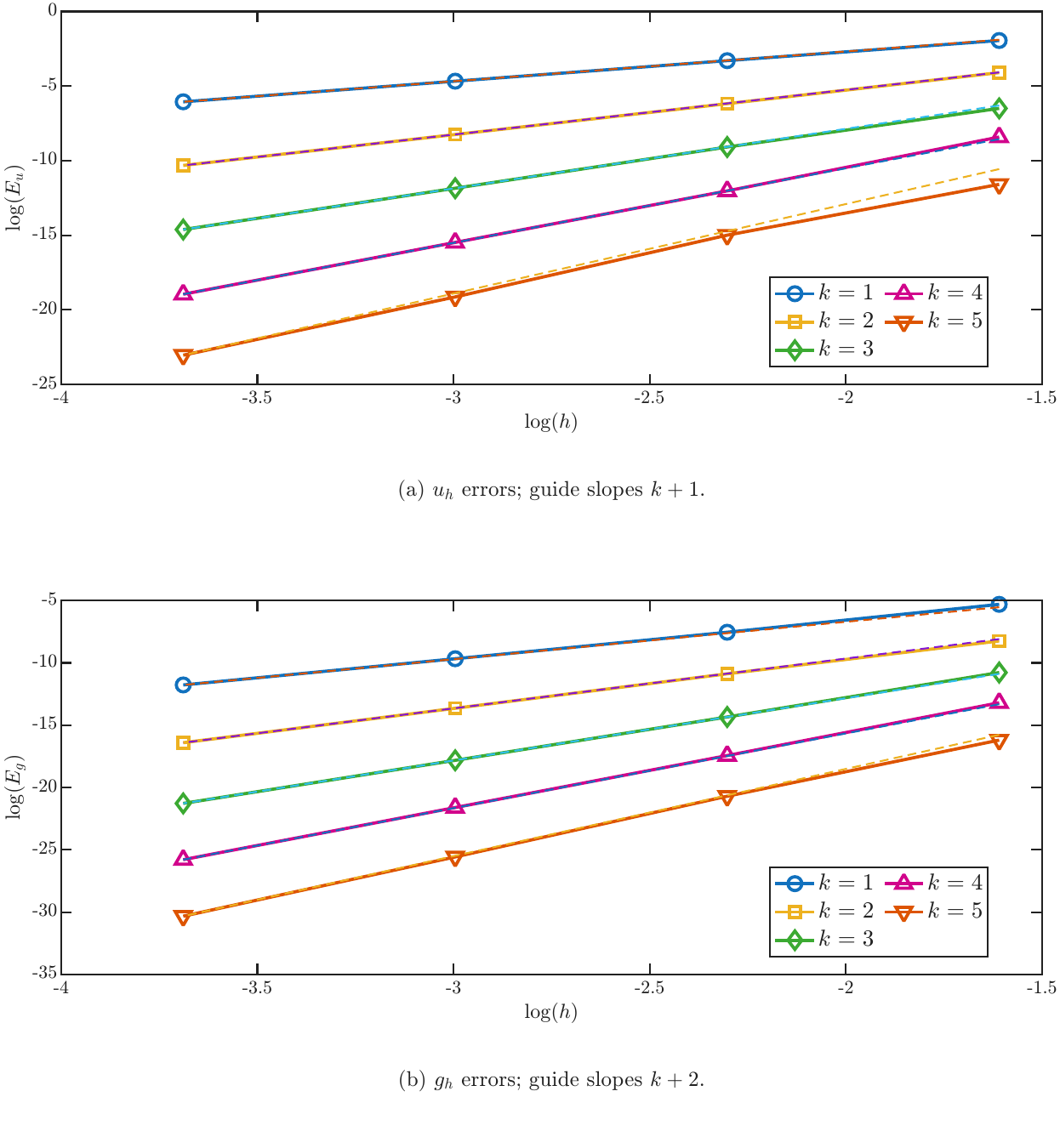}
\caption{Convergence history for Example~\ref{subsec:numerical-example1}. Panel (a) shows the error of $u_h$ with guide slopes $k+1$, while panel (b) shows the error of $g_h$ with guide slopes $k+2$.}
\label{fig:hdg-example1-convergence}
\end{figure}

\begin{figure}[htbp]
\centering
\includegraphics[width=0.92\textwidth]{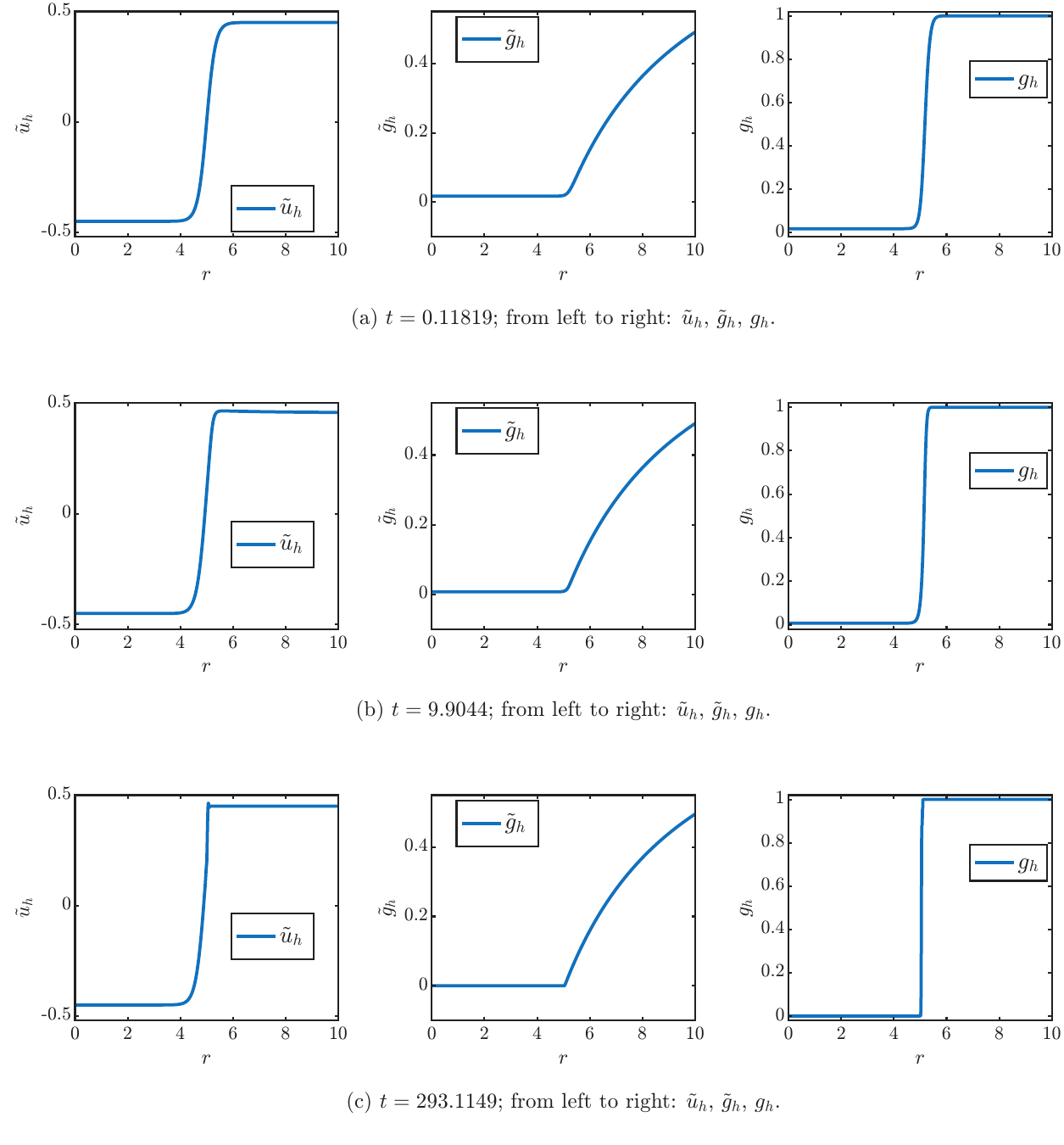}
\caption{Solutions of the Einstein--scalar system for Example~\ref{subsec:numerical-example1}. In each row, from left to right, the three panels show $\tilde u_h$, $\tilde g_h$, and $g_h$. }
\label{fig:hdg-example1-profiles}
\end{figure}

\subsection{Example 2} \textbf{(Long-time evolution for a lower-energy large data set \cite[Ex. 2]{chen2026convergence}).}
\label{subsec:numerical-example2} In this example, we consider a large data set with a smaller effective concentration, so that the collapse develops more slowly and the intermediate evolution can be observed more clearly.
We take
\[
b=10,
\qquad
\tilde u_0(r)=\tanh\bigl(m(r-r_c)\bigr),
\qquad
r_c=6,
\qquad
m=\frac{1}{5.1},
\]
and define
\[
u_0(r)=(r\tilde u_0(r))_r
=
\tanh\bigl(m(r-r_c)\bigr)
+
mr\,\operatorname{sech}^2\bigl(m(r-r_c)\bigr).
\]
The outer boundary data are again chosen as \(
U_b(t)=u_0(b).
\)
This initial condition has a finite Bondi mass and no horizon at the initial time. Compared with Example~\ref{subsec:numerical-example1}, the collapse is slower, so we present the numerical results of the
evolution over a sufficiently larger time. The numerical profiles of $\tilde u_h$, $\tilde g_h$, and $g_h$ are shown in Figure~\ref{fig:hdg-example2-profiles} at several representative times.

% -------------------- Example 5.2 --------------------
\begin{figure}[htbp]
\centering
\includegraphics[width=0.92\textwidth]{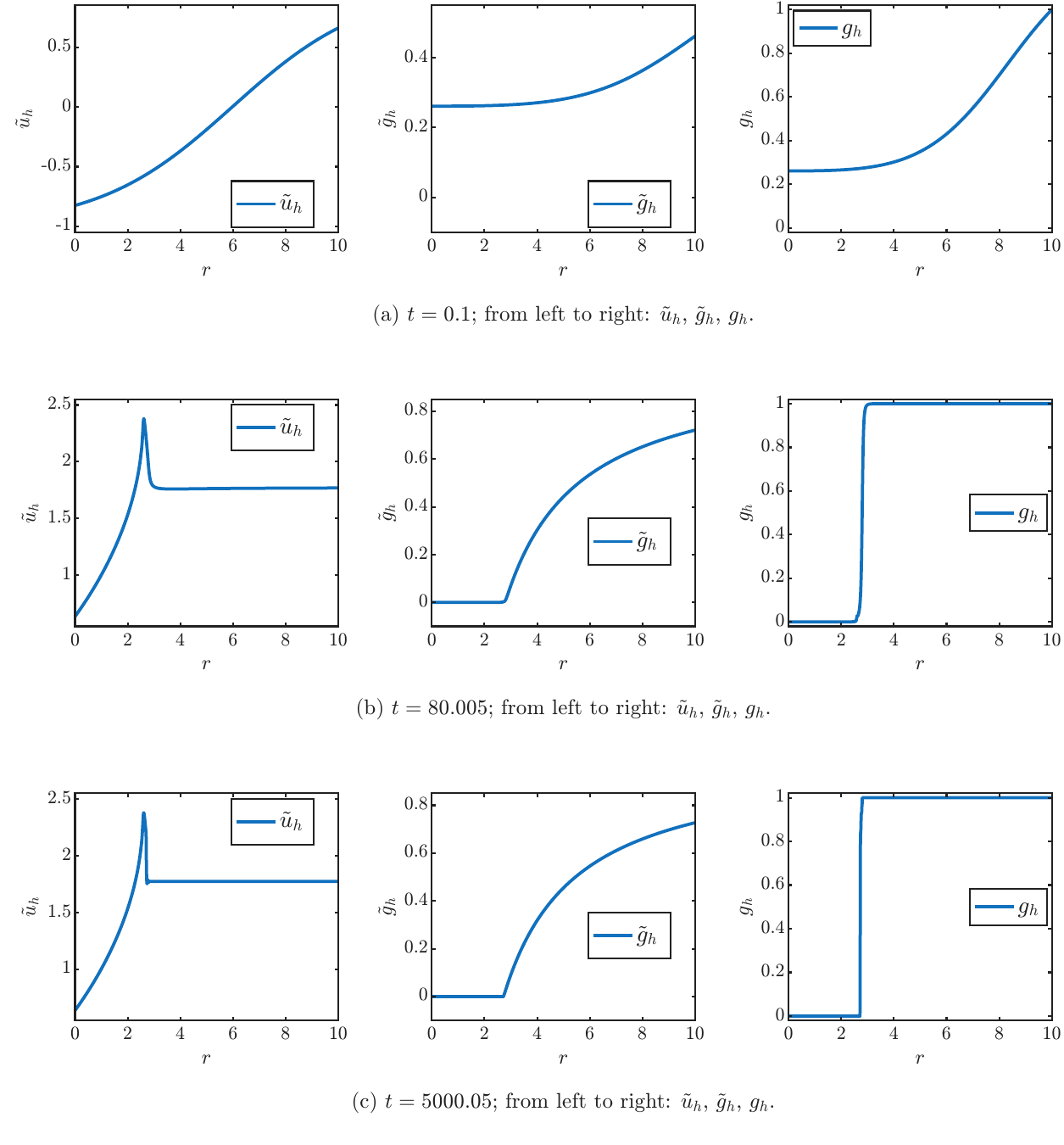}
\caption{Solutions of the Einstein--scalar system for Example~\ref{subsec:numerical-example2}. In each row, from left to right, the three panels show $\tilde u_h$, $\tilde g_h$, and $g_h$. }
\label{fig:hdg-example2-profiles}
\end{figure}

\subsection{Example 3} \textbf{(A Gaussian-pulse evolution).}
\label{subsec:numerical-example3}
In addition to the two large-data benchmark tests, it is useful to include a smoothly localized pulse that is not designed to form a black hole rapidly. A natural reference point is the Bondi-coordinate study of P\"urrer et al. \cite[Sec. 4]{purrer2005news}, who considered Gaussian-like initial data in their analysis of critical and subcritical Einstein--scalar evolutions. Motivated by this literature, we consider the localized pulse on the finite interval $[0,b]$:
\[
\tilde u_0(r)=A\,r^2\exp\!\left(-\frac{(r-r_0)^2}{\sigma^2}\right),
\qquad b=20,
\qquad A=8\times 10^{-3},
\qquad r_0=8,
\qquad \sigma=1.5,
\]
and define the evolution variable by
\[
 u_0(r)=(r\tilde u_0(r))_r
 =A\exp\!\left(-\frac{(r-r_0)^2}{\sigma^2}\right)
 \left(3r^2-\frac{2r^3(r-r_0)}{\sigma^2}\right).
\]
At the outer boundary, we impose zero inflow \(U_b(t)=0\).
The factor $r^2$ guarantees regularity at the center and produces a smooth pulse which is initially concentrated away from $r=0$.
Compared with Examples~\ref{subsec:numerical-example1} and
\ref{subsec:numerical-example2}, this test is not intended as a collapse
benchmark, but as a qualitative smooth-pulse evolution study on a larger computational domain.

Typical output times are chosen as
\[
t=0,
\qquad t=20,
\qquad t=60,
\]
which represent the initial pulse, an intermediate stage during inward motion and focusing, and a later stage where the pulse has moved further toward the center while remaining fully visible in the computational window.
In addition to the profiles of $\tilde u_h$, $\tilde g_h$, and $g_h$, we monitor the numerical Bondi-mass proxy $M_h(t)$ defined by \eqref{eq:bondi_mass}. The numerical results are displayed in Figure \ref{fig:hdg-example3-profiles}.
At $t=0$, the scalar profile $\tilde u_h$ is a smooth positive pulse centered away from the origin. At later times, the pulse moves inward and becomes more concentrated, while a small trailing undershoot appears behind the main peak. In addition to the profiles of \(\tilde u_h\), \(\tilde g_h\), and \(g_h\), we plot in
Figure~\ref{fig:hdg-example3-mass} the finite-radius Bondi-mass proxy $M_h(t)$, which follows from \eqref{eq:bondi_mass} and the boundary condition \(g_h(t,b)=1\). Over the displayed time interval, this quantity changes slowly. This supports the interpretation of the experiment as a smooth weak-field pulse evolution rather than a
rapid transition toward a black-hole-type end state. We emphasize that this figure is intended as a qualitative diagnostic of the reconstructed metric behavior.

\begin{figure}[htbp]
\centering
\includegraphics[width=0.92\textwidth]{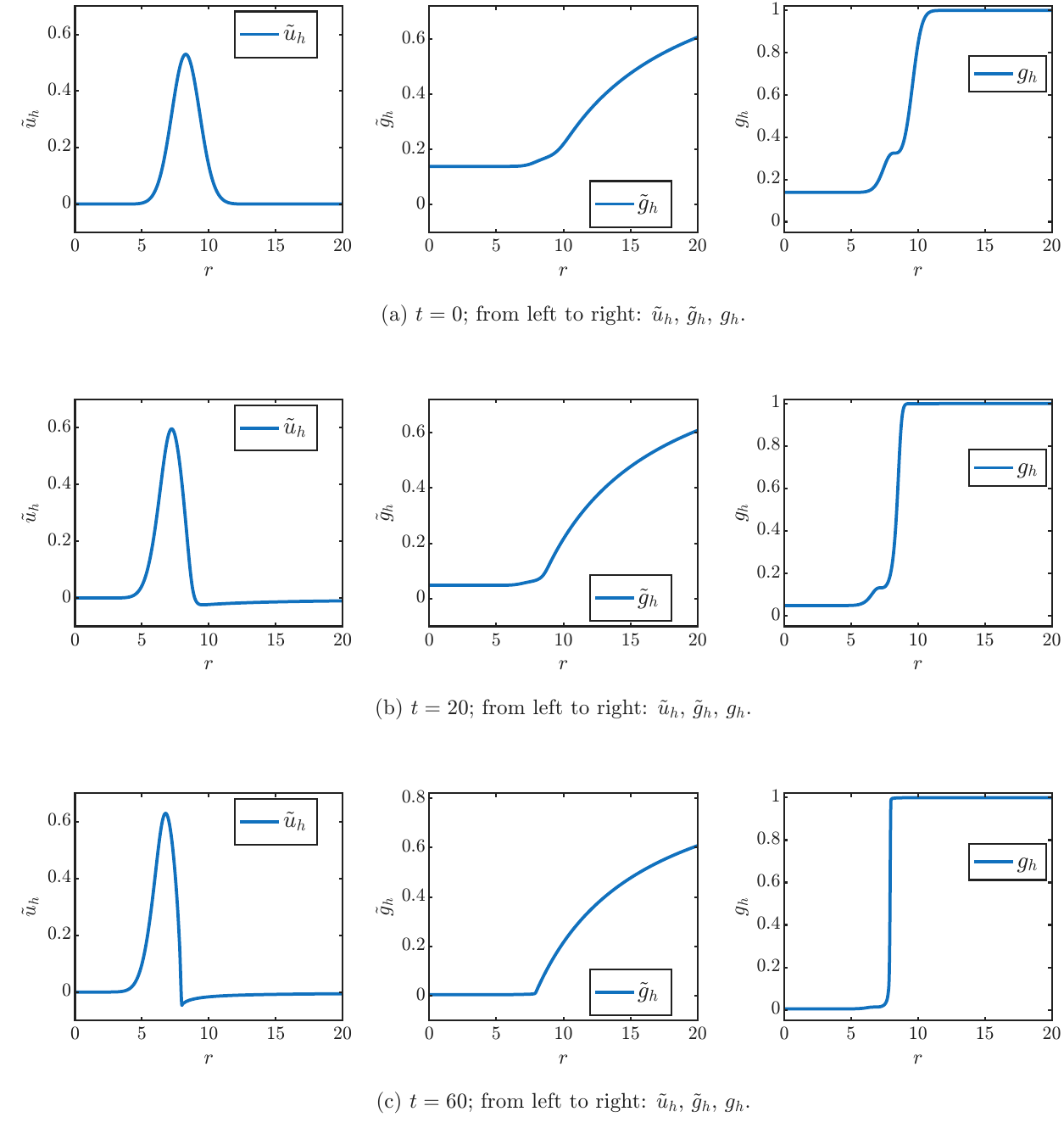}
\caption{Solutions of the Einstein--scalar system for the Gaussian-pulse test of Example~\ref{subsec:numerical-example3}. In each row, from left to right, the three panels show $\tilde u_h$, $\tilde g_h$, and $g_h$. }
\label{fig:hdg-example3-profiles}
\end{figure}

\begin{figure}[htbp]
\centering
\includegraphics[width=0.82\textwidth]{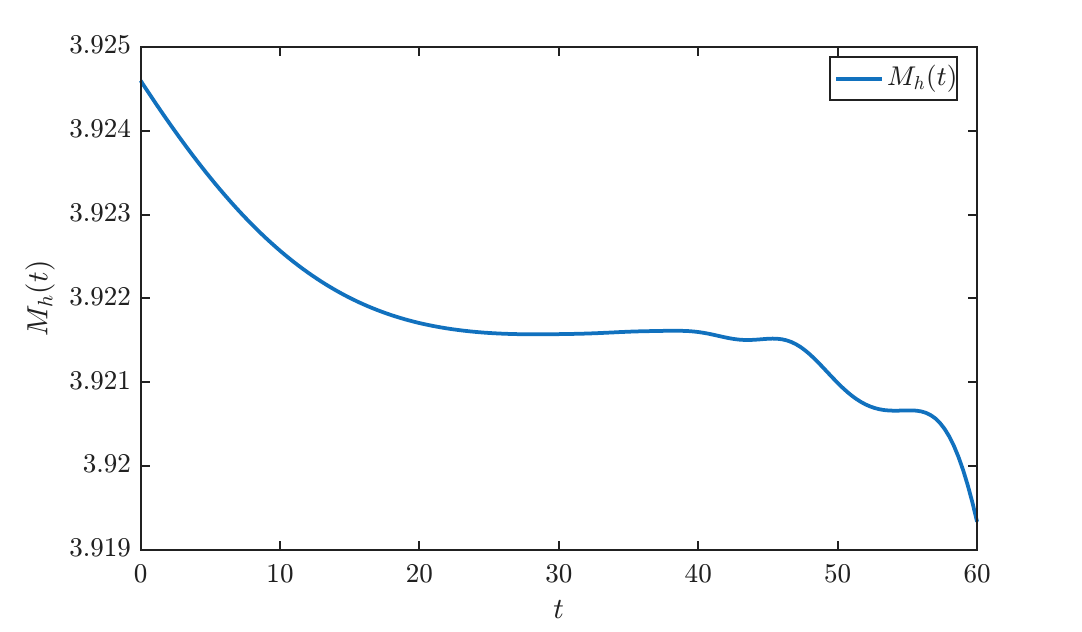}
\caption{Time history of the numerical Bondi-mass proxy for Example~\ref{subsec:numerical-example3}. }
\label{fig:hdg-example3-mass}
\end{figure}

\section{Conclusion}\label{sec:conclusion}
In this paper, we developed a hybridizable discontinuous Galerkin method for the spherically symmetric Einstein--scalar system in Bondi gauge. This method has a clear computational structure. The globally coupled unknowns are reduced to trace variables on the mesh skeleton, while the element variables are reconstructed locally.  We proved local well-posedness of the semidiscrete scheme and derived a global \(L^2\)-stability estimate. We then established an optimal order error bound for the main evolution variable for polynomial degree \(k\ge1\). The analysis further yielded error estimates for the reconstructed variables \(w_h\), \(g_h\), \(z_h\), \(\tilde u_h\), and \(\tilde g_h\), as well as an error estimate for the Bondi mass. These results show that the proposed HDG method is not only stable and convergent, but also accurate for the geometrically relevant quantities of the Einstein--scalar system. The numerical experiments support the theory and illustrate the practical behavior of the method. 

The present work provides the first HDG analysis for this Bondi-gauge Einstein--scalar model and shows that HDG offers a natural alternative to standard DG methods in this setting. Several extensions remain of interest. These include a fully discrete error analysis and a mathematical proof of the observed superconvergence of the reconstructed metric variable.

\section*{Code availability}
The code used to generate the numerical results reported in this paper is publicly available at \url{https://github.com/AndreasRupp/hdg_einstein_scalar}.

\bibliographystyle{abbrv}
\bibliography{Main_Eins}
\end{document}